\def \RR{\mathbb R}
\def \SS{\mathbb S}
\def \A{\mathcal A}
\def \C{\mathcal C}
\def \F{\mathcal F}
\def \O{\mathcal O}
\def \indic#1{1\!\!1_{\!#1}}
\def \W{\widetilde{W}}
\def \close#1{\overline{#1}}
\theoremstyle{plain}
\newtheorem{theorem}{Theorem}[section]
\newtheorem{proposition}[theorem]{Proposition}
\newtheorem{corollary}[theorem]{Corollary}
\newtheorem{lemma}[theorem]{Lemma}
\theoremstyle{definition}
\newtheorem{remark}[theorem]{Remark}
\begin{document}

\title[Brownian motion in a cone]{Brownian motion conditioned to stay in a cone}
\author[R.~Garbit]{Rodolphe Garbit}
\date{May 21, 2009}
\address{Laboratoire de Math\'ematiques Jean Leray, UMR CNRS 6629\\
Universit\'e de Nantes\\ BP 92208\\ 44322 Nantes Cedex 3\\ France.}
\email{rodolphe.garbit@univ-nantes.fr}

\begin{abstract}
A result of R.~Durrett, D.~Iglehart and D.~Miller states that Brownian meander is Brownian motion conditioned to stay positive for a unit of time, in the sense that it is the weak limit, as $x$ goes to $0$, of Brownian motion started at $x>0$ and  conditioned to stay positive for a unit of time.
We extend this limit theorem to the case of multi\-dimensional Brownian motion conditioned to stay in a smooth convex cone. 
\end{abstract}

\maketitle

\section{Introduction}\label{intro}
The purpose of this paper is to prove the existence of a process which is, in some sense, a multidimensional Brownian motion started at the vertex of a  smooth convex cone and conditioned to stay in it for a unit of time.

Let $\C_{\infty}$ be the space of continuous functions $w:[0,+\infty)\to\RR^d$, $d\geq 1$, endowed with the topology of uniform convergence on compact subsets,
and let $\F$ be the corresponding Borel $\sigma$-algebra. We shall use $\C_{\infty}$ as a concise notation for $(\C_{\infty},\F)$.
Weak convergence in the space of probability measures on $\C_{\infty}$ will be denoted by the symbol $\Rightarrow$.

Let $\{X_t,t\geq 0\}$ be the canonical process on $\C_{\infty}$ for which $X_t(w)=w(t)$ for any $w\in\C_{\infty}$.
Consider an open cone $C$ with vertex at the origin $0$ and let $\tau_C=\inf\{t>0:X_t\notin C\}$ be the first exit time of the canonical process from~$C$.
For any $x\in C$ we define the law $\W^C_{x,1}$ of the Brownian motion started at $x$ and conditioned to stay in $C$ for a unit of time by
the formula
$$\W^C_{x,1}(*)=W_x(*\,\vert\,\tau_C>1)\;,$$
where $W_x$ is the distribution on $\C_{\infty}$ of the standard $d$-dimensional Brownian motion started at $x$.

The main result of this paper is the following theorem (the precise definition of a nice cone is given in Section~\ref{ccones}; for example, any circular or ellipsoidal cone is nice).
\begin{theorem}\label{mainthm}
Suppose $C$ is a nice cone.
As $x\in C$ goes to $0$, the law $\W^C_{x,1}$ converges weakly on $\C_{\infty}$ to a limit $\W^C_{0,1}$.\\
For any $t\in (0,1]$, the entrance law $\W^C_{0,1}(X_t\in dy)$ has the density $e(t,y)$ (w.r.t. Lebesgue measure) given by
the formula~\eqref{densite}.
\end{theorem}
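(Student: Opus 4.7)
The plan is to deduce weak convergence from Prokhorov's theorem by separately establishing (i) convergence of the finite-dimensional distributions of $\W^C_{x,1}$ to an explicit limit, and (ii) tightness of the family $(\W^C_{x,1})_{x\in C}$ as $x\to 0$. The key analytic ingredient is the positive harmonic function of the cone: if $\lambda_1>0$ is the principal Dirichlet eigenvalue of the Laplace--Beltrami operator on $C\cap\SS^{d-1}$ with positive eigenfunction $\phi_1$, then $h(r\sigma)=r^\alpha \phi_1(\sigma)$, for an exponent $\alpha>0$ determined by $\lambda_1$ and $d$, is positive and harmonic on $C$ and vanishes on $\partial C$. From the eigenfunction expansion of the Dirichlet heat kernel $p^C_t(x,y)$ on $C$, one extracts the two pointwise asymptotics, valid as $x\to 0$ in $C$,
$$p^C_t(x,y)=h(x)\,q(t,y)\,(1+o(1)),\qquad W_x(\tau_C>t)=K\,h(x)\,t^{-\alpha/2}\,(1+o(1)),$$
for an explicit positive kernel $q(t,y)$ and positive constant $K$ coming from the leading spectral mode.

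For the finite-dimensional distributions, the Markov property gives, at $0<t_1<\cdots<t_n\leq 1$,
$$\W^C_{x,1}(X_{t_1}\in dy_1,\ldots,X_{t_n}\in dy_n)=\frac{p^C_{t_1}(x,y_1)\prod_{i=1}^{n-1}p^C_{t_{i+1}-t_i}(y_i,y_{i+1})\,W_{y_n}(\tau_C>1-t_n)}{W_x(\tau_C>1)}\,dy_1\cdots dy_n.$$
Dividing $p^C_{t_1}(x,y_1)$ and $W_x(\tau_C>1)$ by $h(x)$ and letting $x\to 0$ via the two asymptotics above yields a limiting density proportional to $q(t_1,y_1)\prod_{i=1}^{n-1}p^C_{t_{i+1}-t_i}(y_i,y_{i+1})\,W_{y_n}(\tau_C>1-t_n)$; the case $n=1$ identifies the entrance density $e(t,y)$ of formula~\eqref{densite}. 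Pointwise convergence is upgraded to convergence of the $n$-marginals as probability measures using Scheffé's lemma, whose hypothesis (the limit density integrates to one) follows from the Markov decomposition of $W_x(\tau_C>1)$ combined with Gaussian upper bounds on $p^C$.

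For tightness, I would start from the elementary bound $\W^C_{x,1}(A)\leq W_x(A)/W_x(\tau_C>1)$, which transfers any oscillation estimate from $W_x$ at the price of a factor $\sim (Kh(x))^{-1}$. On subintervals $[\eta,1]$ with $\eta>0$ this factor is absorbed by the standard Brownian modulus of continuity combined with the already-established convergence of one-dimensional marginals at time $\eta$. The delicate case is the modulus near $t=0$: here I would split the event $\{\sup_{s\leq \delta}|X_s-x|>\epsilon\}\cap\{\tau_C>1\}$ via the Markov property at time $\delta$, and combine the Gaussian oscillation estimate with the $h$-transform martingale identity $E_x[h(X_\delta)\indic{\tau_C>\delta}]=h(x)$ to recover the $h(x)$ factor that cancels the denominator.

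The main technical obstacle is obtaining the asymptotics of the first step with enough uniformity in $y$ to enable dominated convergence in the finite-dimensional step and quantitative tightness estimates near $t=0$. Both rely essentially on the ``nice cone'' hypothesis (smoothness and convexity), which guarantees a clean spectral decomposition on $C\cap\SS^{d-1}$, a strict spectral gap, and regularity of $\phi_1$ and $h$ up to $\partial C$; without such regularity the pointwise expansion of $p^C_t(x,y)$ would not come with the uniform Gaussian envelope required to run Scheffé and to control the $h$-transform on small time intervals.
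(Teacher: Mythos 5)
Your plan of establishing finite-dimensional convergence plus tightness and your treatment of the finite-dimensional distributions are close in spirit to the paper's: both extract the leading asymptotics $p^C(t,x,y)\sim g(x)\,h(t,y)$ and $W_x(\tau_C>1)\sim g(x)\int h(1,z)\,dz$ from the Ba\~nuelos--Smits eigenfunction expansion and divide out the factor $g(x)$ (your positive harmonic function is the paper's $g$). The paper does the extra work of proving a uniform domination (Lemma~\ref{domination}) to justify dominated convergence and an equi-integrability estimate (inside Lemma~\ref{denslimpretran}) to upgrade pointwise convergence of densities to $L^1$ convergence, which is essentially what your Scheff\'e step requires, so that part of your sketch is sound modulo details.

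The tightness argument is where your proposal diverges from the paper and where there is a genuine gap. For the modulus near $t=0$ you pass to the $h$-transform and invoke the martingale identity $E_x[h(X_\delta);\tau_C>\delta]=h(x)$ to cancel the denominator $W_x(\tau_C>1)\sim Kh(x)$. After this cancellation what you must show is $\lim_{\delta\to 0}\limsup_{x\to 0}\hat P_x\bigl(\sup_{s\le\delta}\lvert X_s-x\rvert>\epsilon\bigr)=0$, where $\hat P_x$ is the $h$-process law; but the martingale identity alone is consistent with this quantity being anything in $[0,1]$. Proving it is a tightness (equivalently, Feller-at-the-vertex) statement for the $h$-process that is of essentially the same nature and difficulty as the statement you set out to prove, and by the scaling invariance of the cone it is equivalent to a nontrivial long-time fluctuation bound for the $h$-process started at a fixed interior point. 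Nothing in your outline supplies that input. The paper instead uses Shimura's observation that the path automatically has modulus at most $2\epsilon$ until it first exits $B(0,\epsilon)$, applies the strong Markov property at that exit time, and then uses the weak convergence of $\W^C_{y,t}$ as $y$ tends to a boundary point $y_0\in\partial C\setminus\{0\}$ and $t\to t_0$ (Lemma~\ref{prolbord}). That boundary convergence is the real technical content of the paper: it requires the preconditioning device, the ball estimate of Lemma~\ref{CONTRPOURBOULTPSSORT}, and the whole ``nice set'' apparatus of Section~\ref{precond} culminating in Theorem~\ref{convnice}. None of this appears in your proposal, and the $h$-transform identity does not substitute for it.
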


Theorem~\ref{mainthm} is  the multidimensional analog of Durrett, Iglehart and Miller result~(\cite{DIM77}, Theorem 2.1) in which they consider Brownian motion conditioned to stay positive for a unit of time and identify the limit as the Brownian meander. In the case of two-dimensional Brownian motion, Theorem~\ref{mainthm} is due to Shimura~(\cite{Shi85}, Theorem 2). 

For geometric reasons, the extension of Shimura's result to higher dimensions is not straightforward.
We first prove the convergence of the finite-dimensional distribution with the help of an explicit formula for the heat kernel of a cone given by Ba\~nuelos and Smits in~\cite{BS97}.
Then, we prove tightness of the laws $\W^C_{x,1}$ as $x\in C\to 0$ using a principle already present in Shimura's article: If $\W^C_{x,1}$ converges weakly as $x$ tends to any point $x_0\in\partial C\setminus\{0\}$, then the tightness as $x\in C\to 0$ follows. 
For a two-dimensional cone, proving weak convergence of $\W^C_{x,1}$ as $x\to x_0\in\partial C\setminus\{0\}$ is quite easy because $\partial C$ is locally linear at $x_0$, so the proof is nearly the same as in the one-dimensional case.
But in higher dimensions the geometry of the boundary of a cone is not so simple and we are led to a quite more general problem: Given an open set $U$ and a point $x_0\in\partial U$, does the law $\W^U_{x,1}$ of Brownian motion started at $x\in U$ and conditioned to stay in $U$ for unit of time converge weakly as $x\in U\to x_0$? The major part of this paper is in fact concerned with the study of this question.

In Section~\ref{gpotcl}, we consider the general problem of Brownian motion conditioned to stay in an open set $U$ and give some useful properties of the conditioned laws $\W^U_{x,1}$, such as the Markov property and a form of continuity with respect to the variable $x$. In Section~\ref{thsc}, we recall Durret-Iglehart-Miller result on Brownian motion conditioned to stay positive which extends immediately to the case of Brownian motion conditioned to stay in a half-space. From the half-space case, we then derive in Section~\ref{precond} a  convergence theorem for  $\W^U_{x,1}$ as $x\to x_0\in\partial U$ when $U$ is nice at $x_0$.
This new result is based on the \textit{ball estimate} (Lemma~\ref{CONTRPOURBOULTPSSORT}) which constitute the heart of this paper.
Finally, in Section~\ref{ccones} we present a complete proof of Theorem~\ref{mainthm}, and we give some properties of the limit process, such as the distribution of its first exit time from the cone after time $1$.

\subsubsection*{Notations}
If $\mu$ is a probability measure on a space $(X,\A)$, we will denote by $\mu(f)$ the expectation of a measurable function with respect to $\mu$.
For a set $A\in\A$ and a measurable function $f$, the notation $\mu(A;f)$ stands for $\mu(\indic{A}\times f)$, where $\indic{A}$ is the characteristic function of the set $A$. For consistency, $\mu(A;B)$ will often be preferred to
$\mu(A\cap B)$.

\section{Basic facts about the conditioned laws}\label{gpotcl}

\subsection{Markov property}
Let $U$ be an open subset of $\RR^d$ and let $\tau_U$ be the first exit time from $U$. For any $x\in U$ and $t>0$, we set
$$\W^U_{x,t}(*)=W_x(*\,\vert\,\tau_U>t)=\frac{W_x(*;\tau_U>t)}{W_x(\tau_U>t)}\;.$$
For convenience, we will also use the notation
$\W^U_{x,t}:=W_x$ for any $t\leq 0$ and $x\in\RR^d$.

Let $\F_t$ be the $\sigma$-algebra generated by the random variables $\{X_s,s\leq t\}$.
The shift operator $\theta_t$ on $\C_{\infty}$ is defined by $\theta_t(w)(s)=w(t+s)$.
For any optional time $\tau$, we set $\F_{\tau^+}=\{A\in \F:\forall t\geq 0, A\cap\{\tau<t\}\in\F_t\}$. 
The laws $\W^U$ inherit a strong Markov property from Brownian motion. The proof is standard and will be omitted here.

\begin{proposition}[Strong Markov property]\label{MarkovForteMBC}Let $x\in U$ and $t>0$. For any optional time $\tau$, any $A\in\F_{\tau^+}$ and any positive measurable function $f(s,w):[0,+\infty)\times \C_{\infty}\to\RR$, we have
$$\W^U_{x,t}\left(A;\tau < t;f(\tau,\theta_{\tau}\cdot)\right)=\W^U_{x,t}\left(A;\tau< t;\W^U_{X(\tau),t-\tau}(f(s,\cdot))_{|s=\tau}\right)\;.$$
\end{proposition}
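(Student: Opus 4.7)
The plan is to reduce the identity to the strong Markov property of ordinary Brownian motion by unwinding the definition $\W^U_{x,t}(\cdot)=W_x(\cdot\,;\tau_U>t)/W_x(\tau_U>t)$. After multiplying both sides by the constant $W_x(\tau_U>t)$, the identity to prove becomes
\begin{equation*}
W_x\bigl(A;\tau<t;\tau_U>t;f(\tau,\theta_\tau\cdot)\bigr)=W_x\bigl(A;\tau<t;\tau_U>t;\W^U_{X(\tau),t-\tau}(f(s,\cdot))_{|s=\tau}\bigr).
\end{equation*}
The crucial observation is a pathwise decomposition: since $U$ is open and trajectories are continuous, on $\{\tau<t\}$ one has
\begin{equation*}
\indic{\tau_U>t}=\indic{\tau<\tau_U}\cdot\bigl(\indic{\tau_U>t-\tau}\circ\theta_\tau\bigr),
\end{equation*}
and moreover $X(\tau)\in U$ on $\{\tau<\tau_U\}$, while $A\cap\{\tau<t\}\cap\{\tau<\tau_U\}$ belongs to $\F_{\tau^+}$.

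On the left-hand side I would regroup the factors that depend only on the shifted trajectory, namely $\bigl(\indic{\tau_U>t-\tau}\circ\theta_\tau\bigr)\cdot f(\tau,\theta_\tau\cdot)$, and apply the strong Markov property of $W_x$ at the optional time $\tau$ to rewrite the expression as
\begin{equation*}
W_x\bigl(A;\tau<t;\tau<\tau_U;\,W_{X(\tau)}(\tau_U>t-\tau;\,f(\tau,\cdot))\bigr).
\end{equation*}
By the very definition of the conditioned law, the inner Brownian expectation factors as $W_{X(\tau)}(\tau_U>t-\tau)\cdot\W^U_{X(\tau),t-\tau}(f(\tau,\cdot))$; note that the convention $\W^U_{y,s}=W_y$ for $s\leq 0$ is not even needed here since $\tau<t$ enforces $t-\tau>0$. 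A second application of the strong Markov property in the opposite direction then absorbs the factor $W_{X(\tau)}(\tau_U>t-\tau)$ back into $\indic{\tau_U>t-\tau}\circ\theta_\tau$, and combined with the path decomposition above this recovers $\indic{\tau_U>t}$ inside the expectation, producing exactly the right-hand side.

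The only genuinely non-routine point, and what I expect to be the main obstacle, is a measurability check: to evaluate $\W^U_{X(\tau),t-\tau}(f(s,\cdot))_{|s=\tau}$ as an honest $\F_{\tau^+}$-measurable random variable and to integrate it, one needs joint measurability of $(x,s)\mapsto\W^U_{x,s}(f(s,\cdot))$. Both the denominator $(x,s)\mapsto W_x(\tau_U>s)$ and the numerator $(x,s)\mapsto W_x(B;\tau_U>s)$ are well-behaved because $U$ is open and paths are continuous (yielding lower semicontinuity in $x$ and right-continuity in $s$ via $\{\tau_U>s\}=\bigcup_n\{\tau_U>s+1/n\}$), and the extension to general positive $f$ proceeds by a monotone class argument. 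Once this verification is in hand, what remains is a direct manipulation of indicators together with two applications of the strong Markov property of standard Brownian motion, which is why the author describes the proof as standard.
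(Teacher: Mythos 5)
Your proof is correct, and since the paper explicitly omits its own proof as ``standard,'' yours is exactly the argument the author has in mind. The essential ingredients are all present and in the right order: the pathwise identity $\tau_U = \tau + \tau_U\circ\theta_\tau$ on $\{\tau<\tau_U\}$ (valid because $U$ is open, so $X(\tau)\in U$ there), which yields the factorization $\indic{\{\tau_U>t\}} = \indic{\{\tau<\tau_U\}}\cdot\bigl(\indic{\{\tau_U>t-\tau\}}\circ\theta_\tau\bigr)$ on $\{\tau<t\}$; the observation that $A\cap\{\tau<t\}\cap\{\tau<\tau_U\}\in\F_{\tau^+}$; one application of the strong Markov property of $W_x$ in each direction to transfer between the shifted indicator and the Brownian probability $W_{X(\tau)}(\tau_U>t-\tau)$; and the definition of $\W^U$ to split the inner expectation. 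The measurability caveat you raise is the only genuinely technical point, and your sketch of how to settle it (monotone convergence of $\{\tau_U>s\}$ in $s$, lower semicontinuity in $x$, monotone class for general $f$) is the standard route.
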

\subsection{Continuity}

Let $U$ be an open subset of $\RR^d$. We will say that $U$ is co-regular if $W_x(\tau_{\close{U}}>0)=0$ for every $x\in\partial U$; that is,
a Brownian motion started at any point of the boundary of $U$ visits instantaneously the complement of $\close{U}$.
For such a set,  $\tau_U$ and $\tau_{\close{U}}$ are almost surely equal.
 
\begin{proposition}\label{contthm}Suppose $U$ is co-regular.
Then, for every bounded continuous function $f$ on $\C_{\infty}$, the mapping $(x,t)\mapsto\W^U_{x,t}(f)$ is continuous on $U\times(0,+\infty)$.
\end{proposition}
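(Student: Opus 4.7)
The plan is to write $\W^U_{x,t}(f) = W_x(\tau_U>t;f)/W_x(\tau_U>t)$ and to establish joint continuity of numerator and denominator on $U\times(0,+\infty)$, together with strict positivity of the denominator. Positivity is easy: given $x\in U$, choose $r>0$ with $\{y\in\RR^d:|y-x|\le r\}\subset U$; then $W_x(\tau_U>t)\ge W_x(\sup_{s\le t}|X_s-x|<r)>0$. Since the denominator is the special case $f\equiv 1$ of the numerator, it suffices to prove continuity of $(x,t)\mapsto W_x(\tau_U>t;f)$ for every bounded continuous $f$.

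I would couple all the laws $W_x$ on a single probability space by letting $\beta$ be a standard Brownian motion from $0$ and setting $\beta^x:=x+\beta$, so that the law of $\beta^x$ is $W_x$ and
\[
W_x(\tau_U>t;f)=\EE\bigl[f(\beta^x)\,\indic{\{\tau_U(\beta^x)>t\}}\bigr].
\]
For any sequence $(x_n,t_n)\to(x,t)$ in $U\times(0,+\infty)$ the paths $\beta^{x_n}$ converge to $\beta^x$ almost surely, uniformly on compacts. Continuity of $f$ then gives $f(\beta^{x_n})\to f(\beta^x)$ almost surely, and since $f$ is bounded, dominated convergence reduces the problem to showing
\[
\indic{\{\tau_U(\beta^{x_n})>t_n\}}\longrightarrow\indic{\{\tau_U(\beta^x)>t\}}\quad W_x\text{-a.s.}
\]

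For this indicator convergence I would combine two ingredients. First, $\tau_U$ is almost surely continuous at the Brownian path $\beta^x$: the functional $\tau_U$ is lower semicontinuous on $\C_\infty$ (because $U$ is open, if a path stays in $U$ up to time $t$, so does every sufficiently close path), while $\tau_{\close{U}}$ is upper semicontinuous (by the same argument applied to the open set $\close{U}^c$); since $\tau_U\le\tau_{\close{U}}$ pointwise and co-regularity, via the strong Markov property applied at $\tau_U$, yields $\tau_U=\tau_{\close{U}}$ $W_x$-a.s., one concludes that $\tau_U$ is continuous at $\beta^x$ almost surely. Second, $W_x(\tau_U=t)=0$: by path continuity, $\{\tau_U=t\}\subseteq\{X_t\in\partial U\}$, and since $X_t$ has a strictly positive Gaussian density under $W_x$, this reduces to $|\partial U|=0$, which is a classical potential-theoretic consequence of co-regularity, as any Lebesgue density point of $\partial U$ would make $\close{U}^c$ too thin at that point for Brownian motion to enter it immediately. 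Together these give $\tau_U(\beta^{x_n})\to\tau_U(\beta^x)\neq t$ on a set of full $W_x$-measure, yielding the desired a.s.\ convergence of the indicators.

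The principal obstacle is this last step, the fact $|\partial U|=0$ under co-regularity. The rest of the argument is soft: the semicontinuity of hitting times is purely topological, and the identity $\tau_U=\tau_{\close{U}}$ $W_x$-a.s.\ is immediate from the strong Markov property and the very definition of co-regularity. All the geometric content of the hypothesis is concentrated in this potential-theoretic input; once it is available, continuity of the quotient $\W^U_{x,t}(f)$ on $U\times(0,+\infty)$ follows at once from continuity of the numerator and strict positivity of the denominator.
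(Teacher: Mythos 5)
Your overall strategy is the same as the paper's: reduce to continuity of the unnormalized numerator $W_x(f;\tau_U>t)$, translate all paths to a common base point, and obtain almost sure convergence of the indicators $\indic{\{\tau_U(\beta^{x_n})>t_n\}}\to\indic{\{\tau_U(\beta^{x})>t\}}$ from two facts: $W_x(\tau_U=t)=0$ and $W_x(\tau_U=\tau_{\close{U}})=1$. The second fact you handle exactly as the paper does (strong Markov property at $\tau_U$ plus co-regularity). The problem is your treatment of the first fact.

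You claim that $W_x(\tau_U=t)=0$ ``reduces to $|\partial U|=0$'' via $\{\tau_U=t\}\subseteq\{X_t\in\partial U\}$, and that $|\partial U|=0$ is a potential-theoretic consequence of co-regularity. Both halves of this are wrong, and this is what you yourself flag as the ``principal obstacle.'' First, co-regularity does \emph{not} imply $|\partial U|=0$. In $d=1$, take $K\subset(0,1)$ a fat Cantor set, write $(0,1)\setminus K=\bigcup_n(a_n,b_n)$, and set $U=\bigcup_n\bigl((a_n,a_n+\epsilon_n)\cup(b_n-\epsilon_n,b_n)\bigr)$ with $0<\epsilon_n<(b_n-a_n)/2$ and $\epsilon_n\to 0$. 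Then every point of $\partial U$ is an accumulation point of $\close{U}^{\,c}$ (hence regular for it, since in $\RR$ Brownian motion hits any level immediately), so $U$ is co-regular; yet $\partial U\supset K$ has positive Lebesgue measure. Your density-point heuristic conflates Lebesgue measure with capacity: in $d\geq 2$ a set can have Lebesgue density zero at a point and still be non-thin there (a hyperplane through the point is the standard example), so ``$\close{U}^{\,c}$ thin in Lebesgue sense'' does not give irregularity. Second, and fortunately, $W_x(\tau_U=t)=0$ does not need $|\partial U|=0$ at all: it is the atomlessness of the first exit time distribution from an arbitrary open set, which is what the paper cites (Port and Stone, Theorem 4.7). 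One way to see it directly is that $u(t,x)=W_x(\tau_U>t)$ is a bounded solution of the heat equation on $(0,\infty)\times U$, hence smooth there by interior parabolic regularity, so $t\mapsto u(t,x)$ has no jumps. Replacing your detour through $|\partial U|=0$ with a direct appeal to this standard fact closes the gap, and then the rest of your argument coincides with the paper's proof.
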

\begin{proof}Since $W_x(\tau_U>t)>0$ for any $(x,t)\in U\times(0,+\infty)$, it suffices to prove that the mapping $(x,t)\mapsto W_x(f;\tau_U>t)$ is continuous on $U\times(0,+\infty)$.
Suppose $x_n\to x\in U$ and $t_n\to t>0$. Set $\phi(w)=f(w)\,\indic{\{\tau_U>t\}}(w)$
and
$\phi_n(w)=f(x_n-x+w)\,\indic{\{\tau_U>t_n\}}(x_n-x+w)$.
Since $W_{x_n}(f;\tau_U>t_n)=W_x(\phi_n)$ and $W_{x}(f;\tau_U>t)=W_x(\phi)$, it is enough to show that $\phi_n(w)\to\phi(w)$ for $W_x$-almost every $w$. 

Set $\Omega=\{X_0=x;\tau_U=\tau_{\close{U}}\neq t\}$.
It is well-known that $W_x(\tau_U=t)=0$ (\cite{Por78}, Theorem 4.7), and since $U$ is co-regular we also have $W_x(\tau_U=\tau_{\close{U}})=0$.
Hence $W_x(\Omega)=1$. 
Now choose a path $w\in\Omega$.
Using the continuity of $w$, it is easily seen that 
$\indic{\{\tau_U>t_n\}}(x_n-x+w)\to\indic{\{\tau_U>t\}}(w)$.
Hence $\phi_n(w)\to\phi(w)$ for $W_x$-almost every $w$.
\end{proof}

\subsection{Finite-dimensional distributions}
Let $U\subset\RR^d$ be a co-regular open set and $x_0$ a boundary point of $U$. 
We shall now give a sufficient condition for the weak convergence of the finite-dimensional distributions of $\W^U_{x,1}$, as $x$ tends to $x_0$, that only involves the first transitions.

\begin{proposition}\label{critconvmarg} Suppose that for any $t\in(0,1)$, the first transition law $\W^U_{x,1}(X_t\in dy)$ converges weakly as $x\to x_0$ to a probability measure for which $\partial U$ is a null set. Then, the finite-dimensional distributions of $\W^U_{x,1}$ converge weakly as $x\to x_0$ to some probability measures.
\end{proposition}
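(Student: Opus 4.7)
The plan is to condition on the first time $t_1$ and reduce to the known convergence of the one-dimensional marginal, the continuous dependence in Proposition~\ref{contthm}, and the Portmanteau characterization of weak convergence.

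\medskip

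Fix $k\geq 1$, times $0<t_1<\cdots<t_k\leq 1$, and a bounded continuous function $f:(\RR^d)^k\to\RR$. By the strong Markov property (Proposition~\ref{MarkovForteMBC}) applied at the deterministic time $t_1$,
\begin{equation*}
\W^U_{x,1}\bigl(f(X_{t_1},\ldots,X_{t_k})\bigr)=\int_U h(y_1)\,\mu_x(dy_1),
\end{equation*}
where $\mu_x:=\W^U_{x,1}(X_{t_1}\in\cdot\,)$ and
\begin{equation*}
h(y_1):=\W^U_{y_1,1-t_1}\bigl(f(y_1,X_{t_2-t_1},\ldots,X_{t_k-t_1})\bigr).
\end{equation*}
By assumption $\mu_x$ converges weakly as $x\to x_0$ to some probability measure $\nu$ with $\nu(\partial U)=0$. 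The game is to show that $\int h\,d\mu_x\to\int h\,d\nu$.

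\medskip

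First I would verify that $h$ is bounded and continuous on $U$. Boundedness by $\|f\|_\infty$ is immediate. For continuity at $y_0\in U$, given $y_n\to y_0$ write
\begin{equation*}
h(y_n)-h(y_0)=\W^U_{y_n,1-t_1}(F_{y_n}-F_{y_0})+\bigl[\W^U_{y_n,1-t_1}(F_{y_0})-\W^U_{y_0,1-t_1}(F_{y_0})\bigr],
\end{equation*}
with $F_y(w):=f(y,w(t_2-t_1),\ldots,w(t_k-t_1))$. The bracketed term vanishes by Proposition~\ref{contthm} applied to the bounded continuous path-functional $F_{y_0}$. For the first term, the family of laws $\{\W^U_{y_n,1-t_1}\circ(X_{t_2-t_1},\ldots,X_{t_k-t_1})^{-1}\}_n$ is weakly convergent (again by Proposition~\ref{contthm}), hence tight; combining tightness with the uniform continuity of $f$ on compact subsets of $(\RR^d)^k$ yields that this term tends to $0$.

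\medskip

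Now extend $h$ to all of $\RR^d$ by setting $h\equiv 0$ on $U^c$. Since the measures $\mu_x$ are supported in $U$, this extension does not affect $\int h\,d\mu_x$. The extended function is bounded, continuous on the open set $U$, and identically zero on the open set $(\close U)^c$, so its discontinuity set is contained in $\partial U$. The hypothesis $\nu(\partial U)=0$ then says the discontinuity set is $\nu$-negligible. Applying the Portmanteau theorem for bounded measurable functions whose discontinuity set is $\nu$-null,
\begin{equation*}
\W^U_{x,1}\bigl(f(X_{t_1},\ldots,X_{t_k})\bigr)=\int h\,d\mu_x\longrightarrow \int h\,d\nu\quad\text{as }x\to x_0.
\end{equation*}
The functional $f\mapsto\int h\,d\nu$ is positive linear and maps $1\mapsto 1$, so by Riesz representation it is integration against a probability measure on $(\RR^d)^k$ (which one recognizes as $\nu(dy_1)\,\W^U_{y_1,1-t_1}(X_{t_2-t_1}\in dy_2,\ldots,X_{t_k-t_1}\in dy_k)$). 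This gives the claimed convergence of finite-dimensional distributions.

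\medskip

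The main obstacle is the verification of continuity of $h$ on $U$: Proposition~\ref{contthm} handles a \emph{fixed} path-functional, whereas $F_y$ depends on the starting point $y$, so one needs the small tightness argument above to decouple the two dependencies. Once $h$ is known continuous on $U$, the null-set hypothesis is exactly what Portmanteau requires to push weak convergence of $\mu_x$ to the desired integral convergence.
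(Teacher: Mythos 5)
Your proof correctly carries out the ``standard arguments'' the paper declines to spell out: decomposing at time $t_1$ via the strong Markov property (Proposition~\ref{MarkovForteMBC}), showing $h$ is bounded continuous on $U$ using Proposition~\ref{contthm} together with a small tightness argument to decouple the two dependencies on the starting point, and then applying the Portmanteau theorem to $h$ extended by zero off $U$, whose discontinuity set lies in $\partial U$ and is therefore $\nu$-null by hypothesis. The only cosmetic remark is that your restriction $t_k\leq 1$ is not needed: the same argument works for any $0<t_1<\cdots<t_k$ provided only that $t_1<1$, and the remaining case $t_1\geq 1$ is handled identically by cutting at some fixed $s\in(0,1)$ instead of at $t_1$.
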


Proposition~\ref{critconvmarg} follows from the Markov property and the continuity (Propositions~\ref{MarkovForteMBC} and~\ref{contthm}) by standard arguments, so the proof will be omitted here.

For $x\in U$ and $t>0$, the Markov property of Brownian motion gives 
$$
\W^U_{x,1}(X_t\in dy)=\frac{W_x(X_t\in dy;\tau_U>t;W_{X_t}(\tau_U>1-t))}{W_x(\tau_U>1)}\;.
$$
Since the transitions $W_x(X_t\in dy;\tau_U>t)$ of Brownian motion killed on the boundary of $U$ have densities $p^U(t,x,y)$ with respect to Lebesgue measure $dy$, we get
$$
\W^U_{x,1}(X_t\in dy)=\frac{p^U(t,x,y)}{W_x(\tau_U>1)}W_y(\tau_U>1-t)\,dy\;.
$$
Hence, proving convergence of the finite-dimensional distributions of $\W^U_{x,1}$ consists essentially in finding an asymptotic formula for the heat kernel $p^U(t,x,y)$ as $x\to x_0$.

\subsection{Neat convergence}

Let $U\subset \RR^d$ be a co-regular open set and let $x_0$ be a boundary point of~$U$.
Suppose there exists a law $\W^U_{x_0,1}$ on $\C_{\infty}$ such that $\W^U_{x,1}\Rightarrow \W^U_{x_0,1}$ as $x\in U$ tends to $x_0$. We will say that the convergence is neat (or that $\W^U_{x,1}$ converges neatly to $\W^U_{x_0,1}$ as $x\in U\to x_0$) if the limit process does not leave $U$ before time~$1$, \textit{i.e.} $\W^U_{x_0,1}(\tau_U>1)=1$. The next proposition gives a sufficient condition of neat convergence and states that  the Markov property then holds for the limit process.

\begin{proposition}\label{prolongement}
Suppose $\W^U_{x,1}\Rightarrow\W^U_{x_0,1}$ as $x\in U\to x_0$.\\
If $\W^U_{x_0,1}(X_t\in\partial U)= 0$ for all $t\in(0,1)$, then the convergence is neat and the limit process $\W^U_{x_0,1}$ satisfies the following Markov property:\\
For all $t> 0$, $A\in \F_{t^+}$ and $B\in\F$,
\begin{equation}\label{markovprolongement}
\W^U_{x_0,1}(A;\theta_t^{-1}B)=\W^U_{x_0,1}\left(A;\W^U_{X_t,1-t}(B)\right)\;.
\end{equation}
\end{proposition}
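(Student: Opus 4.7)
My plan is to prove the Markov property first, and then to derive neat convergence as a direct consequence by applying it to the tail event $\{\tau_U > 1-t\}$. The key observation---made possible by the hypothesis---is that $X_t$ lies in the open set $U$ with full measure under the limit law, which is precisely what is needed in order to combine weak convergence with the continuity statement of Proposition~\ref{contthm}.

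As a preliminary step, I would show that $\W^U_{x_0,1}(X_t\in U)=1$ for every $t\in(0,1)$. The set $\{X_t\in\close{U}\}$ is closed in $\C_\infty$ and has $\W^U_{x,1}$-measure one for every $x\in U$, so by the portmanteau theorem $\W^U_{x_0,1}(X_t\in\close{U})\geq\limsup_{x\to x_0}\W^U_{x,1}(X_t\in\close{U})=1$. The hypothesis $\W^U_{x_0,1}(X_t\in\partial U)=0$ then forces $X_t\in U$ almost surely.

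To prove \eqref{markovprolongement}, I would start with bounded continuous $f,g:\C_\infty\to\RR$ with $g$ measurable with respect to $\F_t$. Proposition~\ref{MarkovForteMBC} applied to $\W^U_{x,1}$ reads
\begin{equation*}
\W^U_{x,1}\bigl(g\cdot(f\circ\theta_t)\bigr)=\W^U_{x,1}\bigl(g\cdot\Psi_f(X_t)\bigr),
\end{equation*}
where $\Psi_f(y):=\W^U_{y,1-t}(f)$, extended by $0$ outside $U$. The left-hand side converges to $\W^U_{x_0,1}(g\cdot(f\circ\theta_t))$ by weak convergence, since $g\cdot(f\circ\theta_t)$ is bounded and continuous on $\C_\infty$. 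For the right-hand side, $\Psi_f$ is bounded and continuous on $U$ by Proposition~\ref{contthm}, so the map $w\mapsto g(w)\Psi_f(X_t(w))$ is bounded and continuous at every path $w$ with $X_t(w)\in U$---a set of full $\W^U_{x_0,1}$-measure by the preceding step. The general form of the portmanteau theorem for bounded functions that are $\mu$-almost everywhere continuous then delivers convergence of the right-hand side, yielding the identity for the limit law. A standard monotone class / $\pi$-$\lambda$ argument extends it to $g=\indic{A}$ with $A\in\F_{t^+}$ and to $f=\indic{B}$ with $B\in\F$, giving \eqref{markovprolongement}.

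Neat convergence then follows by applying \eqref{markovprolongement} with $A$ the whole space and $B=\{\tau_U>1-t\}$: for every $y\in U$, $\W^U_{y,1-t}(\tau_U>1-t)=1$ by definition, so the right-hand side equals $1$, while the left-hand side asserts that $X_r\in U$ for every $r\in(t,1]$ with probability one. Taking $t=1/n$ and intersecting over $n$ yields $\W^U_{x_0,1}(\tau_U>1)=1$. The main obstacle is the limit passage in the Markov identity: Proposition~\ref{contthm} only provides continuity of $\Psi_f$ on $U$, so the almost sure statement $X_t\in U$ under the limit is essential, and the hypothesis is exactly strong enough to deliver it via portmanteau.
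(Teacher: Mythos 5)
Your proof is correct and follows essentially the same route as the paper: first establish $\W^U_{x_0,1}(X_t\in U)=1$ for $t\in(0,1)$ (via portmanteau for the closed set $\{X_t\in\close{U}\}$ together with the boundary-null hypothesis), then deduce the Markov identity by passing to the limit in Propositions~\ref{MarkovForteMBC} and~\ref{contthm}, and finally obtain neatness by applying that identity with $B=\{\tau_U>1-t\}$ and letting $t\to 0$. You have simply spelled out the ``standard arguments'' the paper defers to Port and Stone, and the details check out.
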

\begin{proof}
Once it has been observed that the assumptions ensure that $\W^U_{x_0,1}(X_t\in U)=1$ for all $t\in(0,1)$, 
the Markov property of $\W_{x_0,1}^U$ follows from Propositions~\ref{MarkovForteMBC} and~\ref{contthm} by standard arguments (see for example~\cite{Por78}, Proof of Theorem 3.2). 
Then it remains to prove that $\W^U_{x_0,1}(\tau_U>1)=1$. But by the Markov property of $\W^U_{x_0,1}$, we have
$$\W^U_{x_0,1}(\forall t\in(\epsilon,1], X_t\in U)=\W^U_{x_0,1}\left(\W^U_{X_{\epsilon,1-\epsilon}}(\tau_U>1-\epsilon)\right)=1$$
for all $\epsilon\in (0,1)$, thus
the expected result follows by letting $\epsilon\to 0$.
\end{proof}

\begin{remark}\label{01Law} Letting $t\to 0$ in \eqref{markovprolongement} would give a zero-one law for $\W^U_{x_0,1}$ (\textit{i.e.} $\W^U_{x_0,1}(A)=0$ or $1$ if $A\in\F_{0^+}$) if we had the stronger assumption that $\W^U_{x,t}\Rightarrow\W^U_{x_0,1}$ as $(x,t)\to (x_0,1)$. Note that in the special case where $U=C$ is a cone, the last convergence follows from the hypothesis $\W^C_{x,1}\Rightarrow\W^C_{x_0,1}$ because of the scaling property of Brownian motion. More precisely, let $K_t$ be the scaling operator defined for all $w\in \C_{\infty}$ by
$K_t(w)(s)=\sqrt{t}w(s/t)$.
Recall that $W_0$ is $K_t$-invariant. From the scaling invariance of the cone $C$, it is easily checked that
$\W^C_{x,t}=\W^C_{x/\sqrt{t},1}\circ K_t^{-1}$.
If $(x,t)\to(x_0,1)$, then $x/\sqrt{t}\to x_0$ and by the continuous mapping theorem (\cite{Bil99}, Theorem 2.7) we get
$$\W^C_{x,t}\Rightarrow\W^C_{x_0,1}\circ K_1^{-1}=\W^C_{x_0,1}\;.$$
Therefore, the zero-one law follows under the hypothesis of Proposition~\ref{prolongement}.
\end{remark}

\section{The half-space case}\label{thsc}
\subsection{Brownian motion conditioned to stay positive}
We will now recall the one dimensional theorem of Durret, Iglehart and Miller~(\cite{DIM77}, Theorem~2.1) and give a sketch of their proof.
Auxiliary results such as Lemma~\ref{unconv} and~\ref{uncond} shall also be used in Section~\ref{precond}.
Throughout this section we set $d=1$ and we denote by $\tau_+=\inf\{t>0:X_t\leq 0\}$ the first exit time from the half-line $(0,+\infty)$. The related conditional laws will be denoted by $\W^+_{x,1}$.

The Brownian meander is an inhomogeneous Markov process with continuous path that is obtained from Brownian motion by the following path transformation: Let $\sigma=\max\{t< 1:X_t=0\}$ be the time of the last zero before time $1$, and let
$$\widetilde{X}_t=\frac{1}{\sqrt{1-\sigma}}\left\vert X(\sigma+t(1-\sigma))\right\vert\;.$$
Then, with respect to Wiener measure $W_0$, the process $(\widetilde{X}_t)_{t\geq 0}$ is the Brownian meander.
Let $\W^+_{0,1}$ be the law of the Brownian meander on $\C_{\infty}$. We have the following theorem.
\begin{theorem}[\cite{DIM77}, Theorem 2.1]\label{dimone}
As $x>0$ tends to $0$, 
$\W^+_{x,1}\Rightarrow\W^+_{0,1}$.
\end{theorem}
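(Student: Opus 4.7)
The plan is the classical two-step scheme: first establish convergence of the finite-dimensional distributions of $\W^+_{x,1}$, then prove tightness of the family $\{\W^+_{x,1}:0<x\le 1\}$ in $\C_\infty$. Together these give weak convergence by Prokhorov's theorem, and the limit is identified as the law of the Brownian meander by matching finite-dimensional distributions with those of the path-transformed process $(\widetilde X_t)_{t\ge 0}$.

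For the finite-dimensional distributions I would invoke Proposition~\ref{critconvmarg}, which reduces matters to proving convergence of the one-point marginal $\W^+_{x,1}(X_t\in dy)$ for each fixed $t\in(0,1)$ to a law charging no boundary point. On the half-line the killed heat kernel is explicit,
$$p^+(t,x,y)=\frac{1}{\sqrt{2\pi t}}\Bigl(e^{-(y-x)^2/(2t)}-e^{-(y+x)^2/(2t)}\Bigr),\qquad y>0,$$
and the classical reflection identity gives $W_x(\tau_+>s)=2\Phi(x/\sqrt s)-1\sim \sqrt{2/\pi}\,x/\sqrt s$ as $x\to 0^+$. Substituting into
$$\W^+_{x,1}(X_t\in dy)=\frac{p^+(t,x,y)\,W_y(\tau_+>1-t)}{W_x(\tau_+>1)}\,dy$$
and letting $x\to 0^+$ produces a limit density supported on $(0,+\infty)$, which after straightforward identification coincides with the one-point marginal of the Brownian meander at time~$t$.

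Tightness is the real work. On any interval $[\epsilon,1]$ the strong Markov property (Proposition~\ref{MarkovForteMBC}), the continuity of $\W^+_{y,1-\epsilon}(f)$ in $y$ (Proposition~\ref{contthm}), and the already-established convergence of $\W^+_{x,1}(X_\epsilon\in\cdot)$ reduce tightness to a standard Brownian estimate applied to an almost-sure compact range of starting points. The delicate region is $[0,\delta]$: one must show
$$\lim_{\delta\to 0^+}\,\limsup_{x\to 0^+}\,\W^+_{x,1}\Bigl(\sup_{0\le t\le\delta}X_t>\eta\Bigr)=0\qquad(\eta>0),$$
where the conditioning divides by $W_x(\tau_+>1)\sim\sqrt{2/\pi}\,x$, a quantity that vanishes. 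My plan is a first-passage decomposition: writing $T_\eta=\inf\{t:X_t=\eta\}$, apply the strong Markov property at $T_\eta$ to estimate $W_x(T_\eta\le\delta,\,\tau_+>1)$ via $W_x(T_\eta\le\delta\wedge T_0)$ together with $W_\eta(\tau_+>1-T_\eta)$, and exploit the explicit ruin / first-passage densities on the interval $(0,\eta)$ to extract the $O(x)$ factor matching the denominator.

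The main obstacle I anticipate is exactly this near-zero control: the $O(x)$ cancellation between numerator and denominator is sharp, so the upper bounds must be tight enough to survive the division and still vanish as $\delta\to 0^+$, uniformly in small $x$. This is the heart of the DIM argument and the reason the auxiliary lemmas alluded to in the excerpt are extracted separately. Once tightness is secured, weak convergence combined with the identification of the finite-dimensional distributions yields $\W^+_{x,1}\Rightarrow\W^+_{0,1}$, the law of the Brownian meander.
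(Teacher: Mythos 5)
Your proposal is mathematically sound but takes a genuinely different route from the one the paper (following Durrett--Iglehart--Miller) actually uses. The paper does \emph{not} run the FDD-plus-tightness scheme for this theorem. Instead, it invokes the section representation of Lemma~\ref{uncond}: defining $T_x=\inf\{t\ge 0: X_t=x\text{ and }X_s>0\text{ for all }s\in(t,t+1]\}$ and $\phi_x=\theta_{T_x}$, one has $\W^+_{x,1}=W_0\circ\phi_x^{-1}$ for every $x>0$, and this formula still makes sense at $x=0$. Combined with the $W_0$-a.s. convergence $T_x\to T_0$ of Lemma~\ref{unconv}, which forces $\phi_x(w)\to\phi_0(w)$ in $\C_\infty$ for $W_0$-a.e.\ $w$, the dominated convergence theorem (equivalently, the continuous mapping theorem for pushforwards of a fixed law) gives $\W^+_{x,1}\Rightarrow W_0\circ\phi_0^{-1}$ \emph{directly}, with no separate tightness estimate: tightness is subsumed in the almost-sure path convergence. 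The finite-dimensional distributions are then only needed at the very end, to identify $W_0\circ\phi_0^{-1}$ with the Brownian meander law.

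Your scheme --- compute the FDD limit from the explicit killed kernel and $W_x(\tau_+>s)\sim\sqrt{2/\pi}\,x/\sqrt{s}$, then establish a near-zero supremum estimate by a first-passage decomposition --- is essentially the strategy the paper reserves for the \emph{cone} case (Section 5), where no analogue of the section times $T_x$ is available. In one dimension it works: the gambler's-ruin density of $T_\eta$ on $\{T_\eta<T_0\}$ is indeed $O(x)$ as $x\to0$ with a factor $\int_0^\delta g(t)\,dt$ that vanishes as $\delta\to0$, so the sharp $O(x)$ cancellation you flag does go through. But it is considerably more computational than the section-time argument, and you have misread the role of Lemmas~\ref{unconv}--\ref{uncond}: they are not technical patches for the near-zero cancellation, they \emph{are} the proof, replacing the FDD/tightness machinery entirely. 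Also note that the $\sup_{[0,\delta]}$ estimate you isolate is the key ingredient but not by itself the full modulus-of-continuity bound; one must still combine it with the Markov property at the first exit from a small ball, exactly as in the paper's Proposition~\ref{tenssuite}. In short: your plan is a correct alternative proof, modeled on the cone argument, but it is not the paper's proof, and the ``auxiliary lemmas'' you allude to serve a different purpose than you suppose.
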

The idea of the proof of Theorem~\ref{dimone} is to turn the  conditioned laws into unconditioned ones by the mean of well-chosen sections of the original process. Let us give some details.
For all $x\geq 0$, introduce the random time 
$$T_x=\inf\{t\geq 0:X_t=x\mbox{ and }X_s>0\mbox{ for all }s\in(t,t+1]\}\;.$$
which is $W_0$-almost surely finite. 
The next lemma is straightforward:
\begin{lemma}[\cite{DIM77}, Lemma 2.2]\label{unconv}
As $x\to 0$, $T_x$ converges almost surely to $T_0$ with respect to $W_0$. 
\end{lemma}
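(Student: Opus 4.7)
The plan is to prove almost sure convergence by sandwiching $T_x$ between $T_0$ (a deterministic pathwise lower bound valid for every $x>0$) and a pathwise-constructed candidate time $t_x$ which decreases to $T_0$ as $x\downarrow 0$. Everything takes place on the full-measure event where $X$ is continuous and $T_0$ is finite, so the argument is essentially analytic, not probabilistic.

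For the lower bound $T_x\geq T_0$, I fix any $t$ in the defining set of $T_x$ (so $X_t=x$ and $X_s>0$ on $(t,t+1]$) and let $\sigma=\sup\{s\leq t:X_s=0\}$; this supremum is finite and attained, since $X_0=0$ and $X$ is continuous. By definition of $\sigma$, $X$ has no zero on $(\sigma,t]$, and because $X_t=x>0$ the intermediate value theorem forces $X>0$ throughout $(\sigma,t]$. Combined with $X>0$ on $(t,t+1]$ and the inclusion $(\sigma,\sigma+1]\subset(\sigma,t+1]$ (valid since $\sigma\leq t$), this gives $X>0$ on $(\sigma,\sigma+1]$. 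Hence $\sigma$ lies in the defining set of $T_0$, so $T_0\leq\sigma\leq t$; taking the infimum over $t$ yields $T_0\leq T_x$.

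For the upper bound, since $T_0+1\in(T_0,T_0+1]$ the definition of $T_0$ forces $X_{T_0+1}>0$, so by continuity there is some $\eta>0$ with $X>0$ on $[T_0+1,T_0+1+\eta]$. Set $t_x=\inf\{s\geq T_0:X_s=x\}$; since $X_{T_0}=0$ and $X$ is continuous, $t_x\downarrow T_0$ as $x\downarrow 0$. Once $x$ is small enough that $t_x\leq T_0+\eta$, the interval $(t_x,t_x+1]$ decomposes as $(t_x,T_0+1]\cup[T_0+1,t_x+1]\subset(T_0,T_0+1]\cup[T_0+1,T_0+1+\eta]$, and $X>0$ on both pieces by construction. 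Thus $t_x$ lies in the defining set of $T_x$, so $T_x\leq t_x$; combined with the lower bound $T_0\leq T_x\leq t_x\to T_0$, which gives $T_x\to T_0$ almost surely.

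The argument is entirely pathwise and rests only on the continuity of Brownian trajectories together with the almost sure finiteness of $T_0$ already recorded in the excerpt, so I do not expect any substantive obstacle (consistent with the lemma being labelled ``straightforward'' in~\cite{DIM77}).
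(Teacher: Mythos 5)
The paper gives no proof of this lemma at all: it cites \cite{DIM77} and simply remarks that ``the next lemma is straightforward,'' so there is no internal argument to compare against. Your pathwise sandwich argument is correct and does the job. The lower bound $T_0\leq T_x$ via the last-zero time $\sigma$ is clean, and the upper bound via $t_x=\inf\{s\geq T_0: X_s=x\}$ works because, once one knows $X>0$ on $(T_0,T_0+1]$ and $X_{T_0+1}>0$, the interval $(t_x,t_x+1]$ is covered by two pieces on which positivity is known. One point you use without comment is that $T_0$ lies in its own defining set, i.e.\ that $X_{T_0}=0$ and $X>0$ on the whole of $(T_0,T_0+1]$; this is what justifies $X_{T_0+1}>0$ and also (implicitly) the convergence $t_x\to T_0$. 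It is true by continuity --- take $t_n\downarrow T_0$ in the defining set; for any $s\in(T_0,T_0+1]$ one eventually has $s\in(t_n,t_n+1]$, so $X_s>0$, and $X_{T_0}=\lim X_{t_n}=0$ --- but since it is the only not-entirely-trivial step, it would be worth one explicit sentence. With that noted, the proof is complete and sound.
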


To each time $T_x$ we associate the shift operator $\phi_x:=\theta_{T_x}$ acting on $\C_{\infty}$. We then have:
\begin{lemma}[\cite{DIM77}, Lemma 2.3]\label{uncond}For every $x>0$,
$\W^+_{x,1}=W_0\circ\phi_x^{-1}$.
\end{lemma}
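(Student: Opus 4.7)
The plan is to realize $T_x$ as the first success in a sequence of independent trials and then compute the distribution of $\theta_{T_x} X$ under $W_0$ via the strong Markov property. Concretely, I would introduce the stopping times
$$\sigma_1 = \inf\{t \geq 0 : X_t = x\},\qquad \zeta_n = \inf\{t > \sigma_n : X_t = 0\},\qquad \sigma_{n+1} = \inf\{t > \zeta_n : X_t = x\}$$
for $n \geq 1$, all almost surely finite under $W_0$ by recurrence of one-dimensional Brownian motion, and set $N = \inf\{n \geq 1 : \zeta_n - \sigma_n > 1\}$.

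The first step is the almost sure identification $T_x = \sigma_N$. The inequality $T_x \leq \sigma_N$ is immediate from the definition of $\sigma_N$. For the opposite inequality, any $t < \sigma_N$ with $X_t = x$ must lie in some interval $[\sigma_k, \zeta_k)$ with $k < N$ (outside these intervals the process either has not reached $x$ yet or has already come back to $0$ and not returned to $x$); since $\zeta_k - \sigma_k \leq 1$ by the minimality of $N$, one has $\zeta_k \in (t, t+1]$ and $X_{\zeta_k} = 0$, so the positivity requirement defining $T_x$ fails at such a $t$.

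With this in hand, set $p = W_x(\tau_+ > 1) > 0$. The event $\{N \geq n\}$ depends only on $(X_s)_{s \leq \zeta_{n-1}}$ and therefore lies in $\F_{\sigma_n}$, while on this event $\{N = n\}$ coincides with $\{\tau_+ \circ \theta_{\sigma_n} > 1\}$. The strong Markov property of $W_0$ at $\sigma_n$ (under which $\theta_{\sigma_n} X$ is distributed as $W_x$ and independent of $\F_{\sigma_n}$) then yields, for every bounded measurable $F$ on $\C_\infty$,
$$W_0\bigl(N=n;\, F(\theta_{\sigma_n} X)\bigr) = W_0(N \geq n)\cdot W_x(\tau_+ > 1;\, F) = (1-p)^{n-1}\, W_x(\tau_+ > 1;\, F),$$
the factor $(1-p)^{n-1}$ arising inductively from the same argument applied at $\sigma_1,\ldots,\sigma_{n-1}$. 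Summing the decomposition $W_0\bigl(F(\theta_{T_x}X)\bigr) = \sum_{n \geq 1} W_0\bigl(N=n;\, F(\theta_{\sigma_n} X)\bigr)$ produces $p^{-1}\, W_x(\tau_+ > 1;\, F) = \W^+_{x,1}(F)$, which is exactly the claimed equality of laws.

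I expect the main obstacle to lie in Step 1: $T_x$ is defined as an infimum over an uncountable set of candidate times, so one must exclude the possibility of a ``spurious'' intermediate hitting of $x$ that happens to be followed by a positive unit of time while some $\sigma_k$ is not. The interval decomposition into the cycles $[\sigma_k, \zeta_k]$ resolves this cleanly, and once $T_x = \sigma_N$ is established the remainder is a routine application of the strong Markov property combined with the summation of a geometric series.
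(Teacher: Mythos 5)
Your argument is correct. The paper does not reprove this lemma but delegates it to Durrett--Iglehart--Miller~\cite{DIM77}, so there is no internal proof to compare against; your renewal decomposition (the identification $T_x=\sigma_N$ via the cycles $[\sigma_k,\zeta_k]$, strong Markov at $\sigma_n$, and summation of the geometric series) is the standard argument and is, to my knowledge, essentially the one in~\cite{DIM77}. The one point worth making explicit is that $N<\infty$ $W_0$-a.s. (immediate since $N$ is geometric with parameter $p>0$), which is what licenses the final decomposition $W_0(F(\theta_{T_x}X))=\sum_{n\ge 1}W_0(N=n;F(\theta_{\sigma_n}X))$; you use this implicitly. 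Otherwise the proof is complete and correctly handles the potential pitfall you flagged, namely ruling out intermediate visits to $x$ at times other than the $\sigma_k$.
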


Lemma~\ref{uncond} gives an ``unconditioned'' representation of the laws $\W^+_{x,1}$, $x>0$. It is noteworthy that $W_0\circ\phi_x^{-1}$ also make sense for $x=0$ whereas the definition of $\W^+_{x,1}$ does not. From Lemmas~\ref{uncond} and~\ref{unconv}, it follows by the dominated convergence theorem that 
$$\W^+_{x,1}=W_0\circ\phi_x^{-1}\Rightarrow W_0\circ\phi_0^{-1}$$
as $x\to 0$. Note that the limit law clearly satisfies $W_0\circ\phi_0^{-1}(\tau_+>1)=1$; hence the convergence is neat.
In order to  prove Theorem~\ref{dimone}, it remains to identify the limit with the Brownian meander. This can be done by computing the limit of the finite-dimensional distributions
of the laws $\W^+_{x,1}$ which are easily derived from a classical formula for the joint distribution of Brownian motion and its minimum.
We do not give further detail since no expression of these finite-dimensional distributions will be needed in what follows.

\subsection{Brownian motion conditioned to stay in a half-space}
Theorem~\ref{dimone} can  easily be extended to multidimensional Brownian motion conditioned to stay in a half-space.
Let $d\geq 2$. Because of invariance properties of $d$-dimensional Brownian motion we need only to study the case of the half-space $D=\{x\in\RR^d:x_1>0\}$.
Let $BM$ be a Brownian meander and $B_2,\ldots,B_d$ be one-dimensional Brownian motions such that $BM,B_2,\ldots,B_d$ are mutually independent. The $d$-dimensional process $(BM,B_2,\ldots,B_d)$ will be called $D$-Brownian meander and its law will be denoted by $\W^D_{0,1}$.

\begin{corollary}\label{CONQUDEMIESP} 
As $x\in D\to 0$, $\W^D_{x,1}\Rightarrow \W^D_{0,1}$.
\end{corollary}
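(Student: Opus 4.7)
My plan is to exploit the fact that the half-space $D=\{x_1>0\}$ is defined by a single coordinate, so the conditioning decouples from the remaining $d-1$ coordinates. Under $W_x$ with $x=(x_1,\dots,x_d)\in D$, the coordinate processes $X^{(1)},\dots,X^{(d)}$ are mutually independent one-dimensional Brownian motions started at $x_1,\dots,x_d$ respectively, and the event $\{\tau_D>1\}$ is measurable with respect to $X^{(1)}$ alone (it coincides with $\{\tau_+(X^{(1)})>1\}$). Conditioning on an event of this form preserves the product structure, so $\W^D_{x,1}$ factors as the product of $\W^+_{x_1,1}$ on the first coordinate and the law of standard $(d-1)$-dimensional Brownian motion started at $(x_2,\dots,x_d)$ on the remaining coordinates.

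From this decomposition the result follows by combining two weak-convergence facts as $x\in D\to 0$. First, Theorem~\ref{dimone} applied to the first marginal gives $\W^+_{x_1,1}\Rightarrow\W^+_{0,1}$ as $x_1\to 0^+$. Second, Wiener measure on the remaining $d-1$ coordinates depends continuously on the starting point---a trivial instance of Proposition~\ref{contthm} with $U=\RR^{d-1}$ (the boundary is empty, so co-regularity is vacuous and $\tau_U=+\infty$)---so the law of $(d-1)$-dimensional Brownian motion started at $(x_2,\dots,x_d)$ converges weakly to the corresponding law started at $0$.

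To finish I would invoke the standard fact that on a product of Polish spaces, weak convergence of each factor implies weak convergence of the product measures, an easy consequence of the Portmanteau theorem or of Skorokhod representation. Since the space $\C_\infty$ of continuous $\RR^d$-valued paths is homeomorphic to the product of $d$ copies of its real-valued counterpart, this yields
$$\W^D_{x,1}\Rightarrow \W^+_{0,1}\otimes W_0^{\otimes(d-1)}\;,$$
and by construction the right-hand side is precisely the law $\W^D_{0,1}$ of the $D$-Brownian meander. I do not anticipate any serious obstacle: the corollary reduces via coordinate-independence of Brownian motion to a single application of Theorem~\ref{dimone} together with a routine product-of-weak-limits lemma. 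The only mild point requiring care is the factorization step, namely verifying that conditioning a product law on an event measurable with respect to a single factor again yields a product law, which follows directly from the definition of conditional probability.
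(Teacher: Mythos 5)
Your proof is correct and takes essentially the same route as the paper: both exploit that the event $\{\tau_D>1\}$ depends only on the first coordinate, so the conditioned law factors as $\W^+_{x_1,1}\otimes W^{\otimes(d-1)}_{(x_2,\dots,x_d)}$, and the conclusion then follows from Theorem~\ref{dimone} together with continuity of the unconditioned marginals. The paper states this in two sentences; you simply spell out the factorization and the product-of-weak-limits step explicitly.
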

\begin{proof}
A Brownian motion conditioned to stay in the half-space $D$ is a Brownian motion whose first coordinate is conditioned to stay positive. Since the coordinates are independent one-dimensional Brownian motions, the result follows immediately from Theorem~\ref{dimone}.
\end{proof}

\begin{remark}It is clear from the definition of the $D$-Brownian meander that it satisfies $\W^D_{0,1}(\tau_D>1)=1$; thus the convergence in Theorem~\ref{CONQUDEMIESP} is neat and $\W^D_{0,1}$ has the Markov property of Proposition~\ref{prolongement}. Moreover, since $D$ is a cone, it follows from Remark~\ref{01Law} that we also have a zero-one law with respect to $\W^D_{0,1}$.
\end{remark}

\section{Preconditioning}\label{precond}
We shall now use the results of Section~\ref{thsc} in order to obtain a convergence theorem  for the Brownian motion conditioned to stay in a set satisfying some regularity and convexity assumptions (Theorem~\ref{convnice}).
Section~\ref{clftcb} introduces the idea of preconditioning and explains how it can be applied to  the convergence problem.
The proposed method requires an estimate that is studied in Section~\ref{ballestimate}.
This finally leads us to introduce the class of \textit{nice sets} for which we solve the convergence problem in Section~\ref{nicenice}.

\subsection{Changing laws for the convergence problem}\label{clftcb}
Let $U\subset\RR^d$ be a co-regular open set with $0\in\partial U$. Recall that the definition of $\W^U_{x,1}$ by the formula
$$\W^U_{x,1}(*)=\frac{W_x(*;\tau_U>1)}{W_x(\tau_U>1)}\;,$$
does not make any sense for $x=0$ since $W_0(\tau_U>1)=0$.

Now suppose  $U$ is a subset of the half-space $D$. Then a Brownian motion conditioned to stay in $U$ is also a Brownian motion conditioned to stay in $D$ and then conditioned to stay in $U$, that is:
\begin{equation}
\W^U_{x,1}(*)=\W^D_{x,1}(*\,\vert\,\tau_U>1)=\frac{\W^D_{x,1}(*;\tau_U>1)}{\W^D_{x,1}(\tau_U>1)}\;.
\end{equation}
This simple identity is what we call \textit{preconditioning}, for if we take it as a definition, it is the same as before except we have changed the initial law of the paths ($W_x\leftrightarrow\W^D_{x,1}$) which are now preconditioned to stay in $D$. The gain is  that, although $W_0(\tau_U>1)=0$, we might have $\W^D_{0,1}(\tau_U>1)>0$ if the boundary of $U$ is smooth enough at $0$. If $\W^D_{0,1}(\tau_U>1)>0$, we will 
set 
\begin{equation}\label{predef}
\W^U_{0,1}(*):=\W_{0,1}^D(*\,\vert\,\tau_U>1)\;.
\end{equation}

It follows easily from the Markov property of $\W^D_{0,1}$ that the condition $\W^D_{0,1}(\tau_U>1)>0$ is satisfied if $\W^D_{0,1}(\tau_U>0)=1$, that is if $0$ is $\W^D_{0,1}$-irregular for $U^c$.
In~\cite{Bur86}, Corollary 3.1, Burdzy gives an irregularity criterion relative to the $D$-Brownian meander. 
The next lemma is an easy consequence of his result:
\begin{lemma}\label{ballpos}
If $B$ is a ball with radius $r>0$ and center at $(r,0,\ldots,0)$, then
$\W^D_{0,1}(\tau_{B}>0)=1$.
\end{lemma}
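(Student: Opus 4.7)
The plan is to apply Burdzy's irregularity criterion for the $D$-Brownian meander (\cite{Bur86}, Corollary~3.1) directly to the ball $B$. First, I would rewrite $B$ as a supergraph near the origin: writing $x=(x_1,x')$ with $x'=(x_2,\ldots,x_d)\in\RR^{d-1}$, the condition $x\in B$ reads $(x_1-r)^2+|x'|^2<r^2$, and for $|x'|<r$ this is equivalent to
$$x_1>g(|x'|):=r-\sqrt{r^2-|x'|^2}=\frac{|x'|^2}{2r}+O(|x'|^4).$$
Hence, near the origin, $B$ is exactly the region in $D$ lying strictly above the smooth parabolic profile $g$, and $B$ is tangent to $\partial D$ at $0$.

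Next, I would invoke Burdzy's criterion, which characterizes irregularity of $0$ for $B^c$ under $\W^D_{0,1}$ in terms of how quickly such a profile $g$ flattens at the origin relative to the typical size of the meander. The scales at play are those of the $D$-Brownian meander: its first coordinate is a one-dimensional Brownian meander, which near $0$ behaves like a Bessel-$3$ process and so is of order $\sqrt{t}$; the remaining coordinates $(X^2,\ldots,X^d)$ form an independent Brownian motion, so $|X'_t|$ is also of order $\sqrt{t}$ and $g(|X'_t|)=O(t)$. Since $t\ll\sqrt{t}$ for small $t$, the inequality $X^1_t>g(|X'_t|)$ should hold on a random right-neighbourhood of $0$, almost surely, and Burdzy's criterion is precisely what makes this heuristic comparison rigorous.

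The only remaining step is thus to translate the ball into the form of a supergraph and to check the integral condition provided by Burdzy; since $g(u)/\sqrt{u}\to 0$ as $u\to 0^+$, the condition is satisfied with substantial margin and the verification is routine. The main potential obstacle is purely bookkeeping, namely matching the precise hypotheses of Burdzy's corollary (smoothness and monotonicity of the profile, for instance), but both are clearly satisfied by the parabolic profile $g$ arising from a tangent ball. This yields $\W^D_{0,1}(\tau_B>0)=1$.
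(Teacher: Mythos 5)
Your proposal takes essentially the same approach as the paper: the paper simply states that the lemma is ``an easy consequence'' of Burdzy's irregularity criterion (\cite{Bur86}, Corollary~3.1) without giving details, and you fill in the reduction---writing the tangent ball as a supergraph $x_1>g(|x'|)$ with parabolic profile $g(u)=r-\sqrt{r^2-u^2}\sim u^2/(2r)$ and observing that this decays fast enough for Burdzy's integral test. This is the intended argument, elaborated.
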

In particular, if there exists an open ball $B$ tangent to $\partial D$ at $0$ and such that $B\subset U\subset D$, then 
$\W^D_{0,1}(\tau_U>0)=1$. Thus the law $\W^U_{0,1}$ can be defined by relation~\eqref{predef}.
We point out the fact that applying Burdzy criterion to
a proper cone $C\subset D$ with vertex at the origin gives $\W^D_{0,1}(\tau_C>0)=0$; hence a law $\W^C_{0,1}$ can not be defined directly.

Suppose $U$ is such that $\W^D_{0,1}(\tau_U>1)>0$.
The question we then have to answer is the following: Does the convergence $\W^D_{x,1}\Rightarrow\W^D_{0,1}$ imply that
$$\W^D_{x,1}(*\,\vert\,\tau_U>1)\Rightarrow \W^D_{0,1}(*\,\vert\,\tau_U>1)\;?$$
Unless $U$ is locally linear at $0$, this can not follow directly from the continuous mapping theorem, since $\tau_U$ is $\W^D_{0,1}$-almost surely discontinuous. To overcome this problem we will use an estimate that we present in next section.

\subsection{The ball estimate}\label{ballestimate}

Fix $d\geq 2$. 
We shall note $X_1(t),\ldots,X_d(t)$, the coordinates of the canonical process $X(t)$.
Let $D$ be the half-space $\{x\in\RR^d:x_1>0\}$ and $B$ the open ball with center at $e_1=(1,0,\ldots,0)$ and radius $1$. 
 Let $E$ be the set of all $(d-1)$-uples 
 $(\epsilon_2,\ldots,\epsilon_d)$ with $\epsilon_i=\pm 1$. For all $\epsilon=(\epsilon_2,\ldots,\epsilon_d)\in E$, let  $\overline{\epsilon}=(-\epsilon_2,\ldots,-\epsilon_d)$ be the opposite of $\epsilon$. 
 We define a familly of $2^{d-1}$ disjoint subsets of $D$ indexed by $E$ by setting 
$D_{\epsilon}=\{x\in D:\epsilon_2 x_2,\ldots,\epsilon_d x_d> 0\}$.
Let $H$ be the hyperplane $\{x_1=1\}$ and, for all $x\in\RR^d$, let $B(x)$ be the open ball with center at $x$ and radius $1$.
The next lemma is straightforward:
\begin{lemma}\label{UNPEUDEGEOPOUTOULEMONDE}
If $x\in D_{\epsilon}\cap H$, then $B(x)\cap B^c\cap D_{\overline{\epsilon}}=\emptyset$.
\end{lemma}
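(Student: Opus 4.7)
The plan is to argue by direct contradiction. Suppose, for some $x \in D_{\epsilon}\cap H$, there existed a point $y \in B(x)\cap B^c \cap D_{\overline{\epsilon}}$. Unpacking the three memberships gives: $|y-x|<1$ (from $y\in B(x)$); $|y-e_1|\geq 1$ (from $y\in B^c$); and $-\epsilon_i y_i>0$ for $i=2,\ldots,d$ (from $y\in D_{\overline{\epsilon}}$). Combined with the hypothesis $\epsilon_i x_i>0$ coming from $x\in D_{\epsilon}$, this forces $x_i$ and $y_i$ to be strictly nonzero and of strictly opposite signs for each $i\geq 2$, so in particular $x_i y_i<0$ and $x_i^2>0$.

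The key observation is that since $x_1 = 1 = (e_1)_1$, the first-coordinate contributions to $|y-x|^2$ and $|y-e_1|^2$ are identical and cancel out. Subtracting, one is left with
$$
|y-x|^2 - |y-e_1|^2 \;=\; \sum_{i=2}^d\bigl[(y_i-x_i)^2 - y_i^2\bigr] \;=\; \sum_{i=2}^d\bigl[x_i^2 - 2x_i y_i\bigr],
$$
which is strictly positive because every summand is the sum of the strictly positive square $x_i^2$ and the strictly positive quantity $-2x_i y_i$. Hence $|y-x|>|y-e_1|\geq 1$, directly contradicting $|y-x|<1$. So no such $y$ exists.

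I do not expect any real obstacle here: the lemma reduces to an elementary sign-and-expansion argument once one notices that translating the unit ball $B$ from its center $e_1$ to a point $x \in H$ on the \emph{opposite} orthant side of the $e_1$-axis (with respect to the last $d-1$ coordinates) can only push the translated ball away from $e_1$ in those coordinates. The only subtlety is to track that the conditions $\epsilon_i x_i>0$ and $-\epsilon_i y_i>0$ are \emph{strict}, which is exactly what turns the comparison $|y-x|^2 \geq |y-e_1|^2$ into the strict inequality needed to contradict $|y-x|<1$.
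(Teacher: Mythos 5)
Your proof is correct, and since the paper merely asserts the lemma is ``straightforward'' and omits a proof, the direct sign-and-expansion argument you give is exactly the computation being left to the reader. The key cancellation $x_1=(e_1)_1=1$ and the strict inequalities from the open-orthant conditions are tracked accurately, and the conclusion $|y-x|>1$ contradicting $y\in B(x)$ closes the argument cleanly.
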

We now come to the estimate which is the heart of this section:
\begin{lemma}\label{CONTRPOURBOULTPSSORT} 
$\lim_{s\to 0}\limsup_{\lambda\to 0}\W^{D}_{\lambda e_1,1}(\tau_{B}\leq s)=0$.
\end{lemma}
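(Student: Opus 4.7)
The plan is to combine preconditioning, the strong Markov property at $\tau_B$, the reflection symmetries $R_\epsilon$, and the geometric content of Lemma~\ref{UNPEUDEGEOPOUTOULEMONDE}.

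First, preconditioning gives $\W^D_{\lambda e_1,1}(\tau_B\leq s) = W_{\lambda e_1}(\tau_B\leq s,\,\tau_D>1)/W_{\lambda e_1}(\tau_D>1)$, with denominator equivalent to $\sqrt{2/\pi}\,\lambda$ as $\lambda\to 0$. Since $\partial B\cap\partial D=\{0\}$ is polar in dimension $d\geq 2$, the event $\{\tau_D>\tau_B\}$ has full $W_{\lambda e_1}$-measure. Strong Markov at $\tau_B$ together with the one-dimensional bound $W_y(\tau_D>1-t)\leq \sqrt{2/(\pi(1-s))}\,y_1$ for $t\leq s<1$ yields the reduction
$$\W^D_{\lambda e_1,1}(\tau_B\leq s)\;\leq\;\frac{C_s}{\lambda}\,E_{\lambda e_1}\!\bigl[\indic{\tau_B\leq s}\,X_1(\tau_B)\bigr],$$
so it suffices to show that the expectation on the right is $\lambda\cdot o_{s\to 0}(1)$, uniformly in small $\lambda$.

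Next, the reflections $R_\epsilon:(x_1,x_2,\ldots,x_d)\mapsto(x_1,\epsilon_2 x_2,\ldots,\epsilon_d x_d)$ fix the starting point $\lambda e_1$, the ball $B$, and the half-space $D$, and permute the octants $D_\epsilon$ bijectively. Decomposing over which $\overline{D_\epsilon}$ contains $X(\tau_B)$ therefore reduces matters to the single-octant expectation
$E_{\lambda e_1}[\indic{\tau_B\leq s,\,X(\tau_B)\in \overline{D_+}}\,X_1(\tau_B)]$ (up to a factor $2^{d-1}$).

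The heart of the proof is the bound on this single-octant expectation. Note that the optional-stopping identity $E_{\lambda e_1}[X_1(\tau_B\wedge s)]=\lambda$ applied to the martingale $X_1$ gives only the trivial bound $E_{\lambda e_1}[\indic{\tau_B\leq s}X_1(\tau_B)]\leq \lambda$, which is uniform in $\lambda$ but does not decay as $s\to 0$. The additional $s$-decay comes from Lemma~\ref{UNPEUDEGEOPOUTOULEMONDE}: for every reference point $x\in D_+\cap H$, the unit ball $B(x)$ cannot reach $B^c\cap D_-$. Together with the restriction $X(\tau_B)\in\overline{D_+}$, this geometric rigidity is exploited---via a reflection coupling, or via a disintegration over the endpoint $X_1$ combined with the scaling $z\mapsto z/z_1$ that places an appropriate reference point on $H$---to force the contributing exit positions to satisfy $X_1(\tau_B)\leq \lambda\,\alpha(s)$ with $\alpha(s)\to 0$; paths that violate this would have to travel a macroscopic transverse distance in time $\leq s$, an event of exponentially small probability by standard Gaussian tail estimates. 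Inserting this into the estimate above gives $\limsup_{\lambda\to 0}\W^D_{\lambda e_1,1}(\tau_B\leq s)\leq C'\alpha(s)$, and letting $s\to 0$ completes the proof.

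\textbf{Main obstacle.} The delicate step is the geometric one: translating the static non-intersection of Lemma~\ref{UNPEUDEGEOPOUTOULEMONDE} into a sharp probabilistic bound that simultaneously keeps the full $\lambda$-factor (needed to cancel the denominator $W_{\lambda e_1}(\tau_D>1)\sim\sqrt{2/\pi}\,\lambda$) and produces a genuine decay as $s\to 0$ requires identifying the right coupling or change of variables.  Once this geometric input is turned into the quantitative inequality above, all other steps are routine applications of the strong Markov property, reflection symmetry, and Gaussian estimates.
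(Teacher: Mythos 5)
The critical gap is in the claimed bound ``$X_1(\tau_B)\leq\lambda\,\alpha(s)$ for contributing exit positions,'' justified by ``Gaussian tail estimates for macroscopic transverse displacement.'' This does not hold, and the heuristic behind it is incorrect. Start from $\lambda e_1$ with $\lambda\ll\sqrt{s}$: the transverse fluctuation up to time $\tau_B\leq s$ is typically of order $\sqrt{s}$, and since near $0$ the boundary $\partial B$ is the paraboloid $x_1\approx\tfrac12(x_2^2+\cdots+x_d^2)$, the exit position typically has $X_1(\tau_B)$ of order $s$, \emph{independently of} $\lambda$. In particular, the event $\{X_1(\tau_B)>\lambda\alpha(s)\}$ is not exponentially rare; as $\lambda\to 0$ it becomes the generic event, and no Gaussian estimate will give the scale $\lambda\alpha(s)$. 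The only correct bound one gets from optional stopping and reflection symmetry alone is $E_{\lambda e_1}[\indic{\tau_B\leq s}X_1(\tau_B)]\leq\lambda$, which you already note does not decay in $s$.

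What is actually missing is the non-elementary ingredient that supplies the $s$-decay: Burdzy's irregularity criterion, cited in the paper as Lemma~\ref{ballpos}, which gives $\W^D_{0,1}(\tau_B>0)=1$, hence $\W^D_{0,1}(\tau_B\leq s)\to 0$ as $s\to 0$. The paper's route is structurally different from yours. It does not work with the ratio $W_{\lambda e_1}(\cdot)/W_{\lambda e_1}(\tau_D>1)$ at all; instead it realizes $\W^D_{\lambda e_1,1}$ as the law of a shifted process $Z_\lambda$ under the unconditioned measure $W_0$ via the stopping times $T_\lambda$ (Lemma~\ref{uncond}), and then compares $Z_\lambda$ with $Z_0$. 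The roles of $D_\epsilon$ and Lemma~\ref{UNPEUDEGEOPOUTOULEMONDE} there are also different from your single-octant reduction: they serve to show that the bad event $\{\tau_B(Z_0)>s+T_\lambda-T_0,\ \tau_B(Z_\lambda)\leq s\}$ has probability at most $(1-2^{-(d-1)})W_0(\tau_B(Z_\lambda)\leq s)$, using conditional independence of $Y_\lambda$ and $Z_\lambda$ given $X_1$ together with the fact that $W_0(Y_\lambda\in D_\epsilon\mid X_1)=2^{-(d-1)}$. Absorbing this into the left side and letting $\lambda\to 0$ (via $T_\lambda\to T_0$ a.s., Lemma~\ref{unconv}) yields $\limsup_\lambda\W^D_{\lambda e_1,1}(\tau_B\leq s)\leq 2^{d-1}\W^D_{0,1}(\tau_B\leq s)$, and the $s\to 0$ limit then follows from Burdzy's result. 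In short: your reduction to $E_{\lambda e_1}[\indic{\tau_B\leq s}X_1(\tau_B)]/\lambda$ is sound, but the proposed way to extract decay in $s$ from it is not; that decay must come from the quantitative irregularity of $0$ for $B^c$ under the meander law, not from Gaussian estimates.
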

\begin{proof} 
We will show that
\begin{equation}\label{CONPROSORAVAS}
\limsup_{\lambda\to 0}\W^{D}_{\lambda e_1,1}(\tau_{B}\leq s)\leq 2^{d-1}\,\W^D_{0,1}(\tau_{B}\leq s)
\end{equation}
for all $s>0$, and the lemma will then follow by letting $s\to 0$ since $\W^D_{0,1}(\tau_B=0)=0$ (Lemma~\ref{ballpos}).
For $\lambda\geq 0$, set 
$$T_{\lambda}=\inf\{t\geq 0:X_1(t)=\lambda\hbox{ and } X_1(s)>0,\forall s\in (t,t+1]\}$$
and consider the process  $Z_{\lambda}$ defined by
$$\forall t\geq 0,\quad Z_{\lambda}(t)=X(T_{\lambda}+t)-X(T_{\lambda})+\lambda e_1\;.$$
By independence of the coordinates $X_1,\ldots, X_d$, and Lemma~\ref{uncond}, the process $Z_{\lambda}$ has the distribution $\W^D_{\lambda e_1,1}$ with respect to $W_0$.
Write
\begin{eqnarray}\label{DECOMPPURMIEREVZZ}
W_0(\tau_{B}(Z_{\lambda})\leq s) &\leq & W_0(\tau_{B}(Z_ 0)\leq s+T_{\lambda}-T_0)\\
& &+W_0(\tau_{B}(Z_0)> s+T_{\lambda}-T_0;\tau_{B}(Z_{\lambda})\leq s)\;.\nonumber
\end{eqnarray}
For convenience, we set $u=\tau_{B}(Z_{\lambda})$.
If 
$\tau_{B}(Z_0)>s+T_{\lambda}-T_0$ and $u\leq s$, then $Z_0(u+T_{\lambda}-T_0)=X(T_{\lambda}+u)-X(T_0)$ belongs to $B$;
this means that
$Z_{\lambda}(u)$ belongs to $B(Y_{\lambda})$, where we have put 
\begin{eqnarray*}
Y_{\lambda} & = & X(T_0)-X(T_{\lambda})+(1+\lambda)e_1\\
& = &(1,X_2(T_0)-X_2(T_{\lambda}),\ldots,X_d(T_0)-X_d(T_{\lambda}))\;.
\end{eqnarray*}
Note that $Y_{\lambda}\in H$.
Since $Z_{\lambda}(u)\not\in B$, we see by Lemma~\ref{UNPEUDEGEOPOUTOULEMONDE} that $Z_{\lambda}(u)\notin D_{\overline{\epsilon}}$ as soon as $Y_{\lambda}\in D_{\epsilon}$.
Therefore
\begin{eqnarray*}
\lefteqn{W_0(\tau_{B}(Z_0)> s+T_{\lambda}-T_0;\tau_{B}(Z_{\lambda})\leq s)}\\
&\leq &\sum_{\epsilon\in E}W_0(Y_{\lambda}\in D_{\epsilon};\tau_{B}(Z_{\lambda})\leq s;Z_{\lambda}(u)\notin D_{\overline{\epsilon}})\;.
\end{eqnarray*}
Now, it is easily seen that $Y_{\lambda}$ is independent of $Z_{\lambda}$ conditionally to $X_1$. In addition, we have 
 $W_0(Y_{\lambda}\in D_{\epsilon}\vert\, X_1)=1/2^{d-1}$. Thus
\begin{eqnarray*}
\lefteqn{W_0(\tau_{B}(Z_0)> s+T_{\lambda}-T_0;\tau_{B}(Z_{\lambda})\leq s)}\\
& \leq &\frac{1}{2^{d-1}}\,\sum_{\epsilon\in E}W_0(\tau_{B}(Z_{\lambda})\leq s;Z_{\lambda}(u)\notin D_{\overline{\epsilon}})\\
& = &\frac{2^{d-1}-1}{2^{d-1}}\,W_0(\tau_{B}(Z_{\lambda})\leq s)\;.
\end{eqnarray*}
Combining this inequality with equation~\eqref{DECOMPPURMIEREVZZ} gives
\begin{equation*}\label{DECOMPPURMIEREVZA}
W_0(\tau_{B}(Z_{\lambda})\leq s)
\leq 2^{d-1}\,W_0(\tau_{B}(Z_0)\leq s+T_{\lambda}-T_0)
\end{equation*}
and the result follows by letting  $\lambda\to 0$ since
$\lim_{\lambda\to 0} T_{\lambda}=T_0$ almost surely (Lemma~\ref{unconv}).
\end{proof}

\begin{remark}\label{size} Note that the proof of Lemma~\ref{CONTRPOURBOULTPSSORT} does not involve the ``size'' of $B$. Hence the result holds for any open ball that is tangent to $\partial D$ at~$0$.
\end{remark}

\subsection{Application to nice sets}\label{nicenice}
In this section, we introduce the notion of \textit{nice} sets and solve the convergence problem for those sets.

\subsubsection{Convergence with variable sets}
For any set $U\subset\RR^d$ and any $\epsilon>0$, put 
$U_{\epsilon^+}=\{x\in\RR^d: d(x,U)\leq\epsilon\}$ 
and 
$U_{\epsilon^-}=\{x\in U: d(x,U^c)\geq \epsilon\}$.
If $(U_n)$ is a sequence of subsets of $\RR^d$, we will say that $(U_n)$ converges to $U$ and write $U_n\to U$ if for all $\epsilon>0$
there exists a $n_0$ such that 
$$n\geq n_0\Rightarrow U_{\epsilon^-}\subset U_n\subset U_{\epsilon^+}\;.$$
Let $D$ be the half-space $\{x\in\RR^d:x_1>0\}$ and let $B$ be an open ball tangent to $\partial D$ at $0$. Set $e_1=(1,0,\ldots,0)$.

\begin{proposition}\label{progress}
Let $U$ be an open co-regular set such that $B\subset U\subset D$ and let $(U_n)$ be a sequence of sets such that:
\begin{enumerate}
\item[(i)] For all $n$, $B\subset U_n\subset D$;
\item[(ii)] For all $R>0$, $U_n\cap B(0,R)\to U\cap B(0,R)$.
\end{enumerate}
Then, for all sequence $(\lambda_n)$ of positive numbers converging to $0$,
$$\W^{U_n}_{\lambda_ne_1,1}\Rightarrow \W^U_{0,1}\;.$$

\end{proposition}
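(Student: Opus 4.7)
The plan is to pass to the limit in the preconditioning identity of Section~\ref{clftcb}. Since $B\subset U_n\subset D$ and $B\subset U\subset D$, one has
\[
\W^{U_n}_{\lambda_n e_1,1}(*)=\frac{\W^D_{\lambda_n e_1,1}(*;\tau_{U_n}>1)}{\W^D_{\lambda_n e_1,1}(\tau_{U_n}>1)},\qquad \W^U_{0,1}(*)=\frac{\W^D_{0,1}(*;\tau_U>1)}{\W^D_{0,1}(\tau_U>1)},
\]
and Corollary~\ref{CONQUDEMIESP} supplies the weak convergence $\W^D_{\lambda_n e_1,1}\Rightarrow\W^D_{0,1}$. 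The limit denominator is positive: $\W^D_{0,1}(\tau_U>0)=1$ by Lemma~\ref{ballpos} together with $B\subset U$, and the Markov property of $\W^D_{0,1}$ then gives $\W^D_{0,1}(\tau_U>1)>0$, as already noted in Section~\ref{clftcb}. Thus it will suffice to show, for every bounded continuous $f:\C_{\infty}\to\RR$,
\begin{equation}\label{planmain}
\W^D_{\lambda_n e_1,1}(f;\tau_{U_n}>1)\longrightarrow\W^D_{0,1}(f;\tau_U>1),
\end{equation}
since taking ratios (and specialising $f\equiv 1$ for the denominators) will then yield $\W^{U_n}_{\lambda_n e_1,1}\Rightarrow\W^U_{0,1}$.

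The obstruction to a direct application of the portmanteau theorem is that $\indic{\{\tau_{U_n}>1\}}$ is discontinuous on $\C_{\infty}$ and depends on $n$. My plan is to cut off a small initial time $s>0$. Since $B\subset U_n$, on the event $\{\tau_B>s\}$ one has $\tau_{U_n}>s$, so $\indic{\{\tau_{U_n}>1\}}=\indic{\{\tau_{U_n}(\theta_sX)>1-s\}}$. The ball estimate (Lemma~\ref{CONTRPOURBOULTPSSORT}) gives $\limsup_{s\to 0}\limsup_n\W^D_{\lambda_n e_1,1}(\tau_B\leq s)=0$, and Lemma~\ref{ballpos} supplies the analogous statement under $\W^D_{0,1}$. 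Hence both sides of \eqref{planmain} can be replaced, up to errors tending uniformly to $0$ as $s\to 0$, by their restrictions to $\{\tau_B>s\}$.

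I then apply the strong Markov property (Proposition~\ref{MarkovForteMBC}) at time $s$ to the restricted integral: it becomes the $\W^D_{\lambda_n e_1,1}$-expectation of a bounded functional of $(X^{[0,s]},X_s)$ involving the quantity $W_{X_s}(\ldots;\tau_{U_n}>1-s)$. On $\{\tau_B>s\}$ the position $X_s$ lies in $\overline{B}$, and $\W^D_{0,1}$-almost surely in $\open{B}\subset\open{U}$, hence has positive distance to $\partial U$. The key continuity I need is
\[
W_{y_n}(\tau_{U_n}>1-s)\longrightarrow W_y(\tau_U>1-s)\quad\text{whenever }y_n\to y\in\open{U},
\]
which will follow by sandwiching $U_n$ between inner and outer $\varepsilon$-neighborhoods of $U$ (allowed by the hypothesis $U_n\cap B(0,R)\to U\cap B(0,R)$) and invoking the co-regularity of $U$ (giving $W_y(\tau_{\partial U}=1-s)=0$) so that the neighborhood exit times converge as $\varepsilon\to 0$. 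Combined with Proposition~\ref{contthm} applied to the fixed set $U$, this yields the $\W^D_{0,1}$-a.s.\ joint continuity of the integrand, and a portmanteau argument based on $\W^D_{\lambda_n e_1,1}\Rightarrow\W^D_{0,1}$ (using $\W^D_{0,1}(\tau_B=s)=0$ to handle the factor $\indic{\{\tau_B>s\}}$) delivers convergence of the truncated integrals. Letting $s\to 0$ completes \eqref{planmain}.

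I expect the set-continuity step $W_{y_n}(\tau_{U_n}>1-s)\to W_y(\tau_U>1-s)$ to be the main technical point: it is where the local Hausdorff convergence of the $U_n$ and the co-regularity of $U$ both enter in an essential way, and the role of the ball estimate is precisely to ensure that $X_s$ almost surely falls in a region where these hypotheses can be brought to bear.
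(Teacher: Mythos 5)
Your proposal takes a genuinely different route at the crucial step. You share with the paper the opening move: use the preconditioning identity to reduce to $\W^D_{\lambda_n e_1,1}(f;\tau_{U_n}>1)\to\W^D_{0,1}(f;\tau_U>1)$, and invoke the ball estimate (Lemma~\ref{CONTRPOURBOULTPSSORT}) together with $\W^D_{0,1}(\tau_B=0)=0$ to discard the initial time interval $[0,s]$. From that point, however, the paper stays entirely on path space: it introduces $\tau^s_U$ and $\tau^s_{\close{U}}$, shows that the set $\Omega=\{\tau^s_U=\tau^s_{\close{U}}\neq 1\}$ has full $\W^D_{0,1}$-measure, verifies that $\indic{\{\tau^s_{U_n}>1\}}(w_n)\to\indic{\{\tau^s_U>1\}}(w)$ whenever $w\in\Omega$ and $w_n\to w$, and invokes the extended continuous mapping theorem -- then treats the local-convergence hypothesis (ii) by truncating to $B(0,R)$ as a final separate step. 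You instead apply the Markov property at the deterministic time $s$ and push the $n$-dependence down to the kernel level, reducing the problem to showing $W_{y_n}(\cdot\,;\tau_{U_n}>1-s)\to W_y(\cdot\,;\tau_U>1-s)$ when $y_n\to y\in U$. This is a legitimate alternative, and it gives a more hands-on view of where the set convergence and co-regularity enter. Two points deserve more care than your sketch gives them. First, the Markov property as formulated in Proposition~\ref{MarkovForteMBC} applies to functionals of $\theta_\tau w$, so for a general bounded continuous $f$ you should either restrict to finite-dimensional cylinder functions (a convergence-determining class once tightness is in hand) or make a monotone-class argument explicit. Second, because hypothesis (ii) gives only \emph{local} convergence $U_n\cap B(0,R)\to U\cap B(0,R)$, the sandwich $U_{\epsilon^-}\subset U_n\subset U_{\epsilon^+}$ does not hold globally; you need to interlace a localization step (a uniform-in-$n$ control of $W_y(\tau_{B(0,R)}\le 1-s)$) before the sandwich and co-regularity argument can deliver the kernel convergence. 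Both repairs are routine, so the approach works, but they are precisely the places where the paper's path-space argument is more economical because the extended continuous mapping theorem absorbs the $n$-dependence directly.
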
  
\begin{proof}
Set $x_n=\lambda_ne_1$. Since $\W^D_{0,1}(\tau_U>1)\geq\W^D_{0,1}(\tau_B>1)>0$, it suffices to prove that 
\begin{equation}\label{onfd}
\lim_{n\to\infty}\W_{x_n,1}^D(f;\tau_{U_n}>1)=\W^D_{0,1}(f;\tau_U>1)
\end{equation}
for all bounded continuous function $f:\C_{\infty}\to\RR$.

First suppose $U_n\to U$. Since each set $U_n$ contains the ball $B$, we have
\begin{equation}\label{dsfg}
\lim_{s\to 0}\limsup_n\W^D_{x_n,1}(\tau_{U_n}\leq s)=0
\end{equation}
by Lemma~\ref{CONTRPOURBOULTPSSORT}. 
As Shimura does in~\cite{Shi84}, Proof of Lemma 4.1, we
fix $s\in(0,1)$ and put $\tau^s_{U_n}=\inf\{t\geq s: X_t\notin U_n\}$ and 
$\tau^s_U=\inf\{t\geq s: X_t\notin U\}$. If $M$ is a bound for $\vert f\vert$, then
\begin{eqnarray*}
\lefteqn{\vert\W^D_{x_n,1}(f;\tau_{U_n}>1)-\W^D_{0,1}(f;\tau_U>1)\vert}\\
&\leq& \vert\W^D_{x_n,1}(f;\tau^s_{U_n}>1)-\W^D_{0,1}(f;\tau^s_U>1)\vert \\
&&+ M\left(\W^D_{x_n,1}(\tau_{U_n}\leq s)+\W^D_{0,1}(\tau_U\leq s)\right)\;.
\end{eqnarray*}
Hence~\eqref{onfd} will follow from~\eqref{dsfg} and the fact that $\W^D_{0,1}(\tau_U=0)=0$ if we prove that
\begin{equation}\label{jklm}
\lim_{n\to\infty}\W_{x_n,1}^D(f;\tau^s_{U_n}>1)=\W^D_{0,1}(f;\tau^s_U>1)
\end{equation}
for all $s\in(0,1)$. 
To do this, let us introduce also the random time $\tau^s_{\close{U}}=\inf\{t\geq s: X_t\notin \close{U}\}$
and set $\Omega=\{\tau^s_U=\tau^s_{\close{U}}\not=1\}$. 
We shall first prove that $\W^D_{0,1}(\Omega)=1$.
By the Markov property we have
$$\W^D_{0,1}(\tau_U^s=1)=\W^D_{0,1}\left(X_s\in U;\W^D_{X_s,1-s}(\tau_U=1-s)\right)=0$$
because $W_x(\tau_U=1-s)=0$ for all $x\in\RR^d$ (see \cite{Por78}, Theorem 4.7). We have also
\begin{eqnarray*}
\lefteqn{\W^D_{0,1}(\tau^s_U<\tau^s_{\close{U}})}\\
&=&\W^D_{0,1}(X_s\in U;\tau^s_U<\tau^s_{\close{U}})+\W^D_{0,1}(X_s\notin U;s<\tau^s_{\close{U}})\\
&=&\W^D_{0,1}\left(X_s\in U;\W^D_{X_s,1-s}(\tau_U<\tau_{\close{U}})\right)+\W^D_{0,1}\left(X_s\in\partial U;\W^D_{X_s,1-s}(\tau_{\close{U}}>0)\right)\\
&=& 0
\end{eqnarray*}
since $W_x(\tau_U<\tau_{\close{U}})=0$ for all $x\in \RR^d$ and $W_x(\tau_{\close{U}}>0)=0$ for all $x\in\partial U$ (remember that $U$ is co-regular).
Hence $\W^D_{0,1}(\Omega)=1$.

Now, since $U_n\to U$, it is easily seen that for all $w\in\Omega$ and every sequence $(w_n)\in \C_{\infty}$ such that $w_n\to w$,
$$
\indic{\{\tau^s_{U_n}>1\}}(w_n)\to\indic{\{\tau^s_U>1\}}(w)\;.
$$
Hence~\eqref{jklm} follows from the continuous mapping theorem (\cite{Bil99}, Theorem 2.7) and we have~\eqref{onfd} when $U_n\to U$.

Now we turn to the general case, that is we consider the local convergence hypothesis (ii) of Proposition~\ref{progress}. Fix $\epsilon>0$ and  choose $R>0$ such that $\W^D_{0,1}(\tau_{B(0,R)}>1)\geq 1-\epsilon$.
By the continuous mapping theorem, it is easily seen that
$\lim_{n\to\infty}\W^D_{x_n,1}(\tau_{B(0,R)}>1)=\W^D_{0,1}(\tau_{B(0,R)}>1)$.
Therefore
$\W^D_{x_n,1}(\tau_{B(0,R)}>1)\geq 1-2\epsilon$
for all large enough $n$.
Set $U'_n=U_n\cap B(0,R)$ and $U'=U\cap B(0,R)$.
Then 
\begin{eqnarray*}
\lefteqn{\vert\W^D_{x_n,1}(f;\tau_{U_n}>1)-\W^D_{0,1}(f;\tau_{U}>1)\vert}\\
&\leq& \vert\W^D_{x_n,1}(f;\tau_{U'_n}>1)-\W^D_{0,1}(f;\tau_{U'}>1)\vert+3M\epsilon
\end{eqnarray*}
where $M$ is a bound for $\vert f\vert$. By hypothesis $U'_n\to U'$, hence 
$$\limsup_n\vert\W^D_{x_n,1}(f;\tau_{U'_n}>1)-\W^D_{0,1}(f;\tau_{U'}>1)\vert=0$$
by the first step of this proof. Therefore
$$
\limsup_n\vert\W^D_{x_n,1}(f;\tau_{U_n}>1)-\W^D_{0,1}(f;\tau_{U}>1)\vert \leq 4M\epsilon
$$
and the desired result follows by letting $\epsilon\to 0$.
\end{proof}

\subsubsection{Nice sets}\label{nice}
Let $U$ be an open subset of $\RR^d$ and $x_0$ a boundary point of $U$.
We will say that $U$ is \textit{nice} at $x_0$ if there exist a neighborhood $V$ of $x_0$ and a number $r>0$ such that the following conditions are satisfied:
\begin{enumerate}
\item For all $x\in \partial U\cap V$ there exists a half-space $D_x\supset U$ such that:
\begin{itemize} 
\item $x\in\partial D_x$;
\item The ball $B_x\subset D_x$ with radius $r$ which is tangent to $\partial D_x$ at $x$ is contained in $U$;
\item The application $c$ which maps $x$ to the center $c(x)$ of the ball $B_x$ is continuous at $x_0$. 
\end{itemize}
\item For all $y\in U\cap V$ such that $d(y,\partial U)\leq r/2$, there exists a point $x=p(y)\in\partial U\cap V$ such that:
\begin{itemize}
\item $y\in (x,c(x)]$;
\item The mapping $y\mapsto p(y)$ is continuous.
\end{itemize}
\end{enumerate}
\begin{remark}One can check that regularity and convexity assumptions ensure the property of being a ``nice set''.
If the open set $U$ is convex and has a boundary of class $C^2$ in a neighborhood of  $x_0$ then the set $U$ is nice at $x_0$.
\end{remark}

Suppose  $U$ is nice at $x_0$. With the above notations, for any $x\in\partial U\cap V$, the point $x+c(x_0)-c(x)$ belongs to the boundary of the ball $B_{x_0}$; thus we can choose a planar rotation $R_x$ with center at $c(x_0)$ and such that
$R_x(x+c(x_0)-c(x))=x_0$. Note that the angle of $R_x$ tends to $0$ as $x\to x_0$, since $c(x)\to c(x_0)$.
Set $\phi_x(y)=R_x(y+c(x_0)-c(x))$ and $U_x=\phi_x(U)$.\\
Then it can be seen that
\begin{equation}\label{A}
B_{x_0}\subset U_x\subset D_{x_0}
\end{equation}
and
\begin{equation}\label{B}
U_x\cap B(0,R)\to U_{x_0}\cap B(0,R),\quad \mbox{as } x\to x_0\;,
\end{equation}
for all $R>0$.

\begin{theorem}\label{convnice}
Suppose  $U$ is co-regular and nice at $x_0$.\\
Then, as $x\in U\to x_0$, we have 
$\W^U_{x,1}\Rightarrow \W^U_{x_0,1}$.
\end{theorem}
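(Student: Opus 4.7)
My plan is to reduce Theorem~\ref{convnice} to Proposition~\ref{progress} via the rigid motions $\phi_x$ attached to the nice-set structure at $x_0$. By a preliminary rigid motion I assume $x_0 = 0$, $D_{x_0} = D = \{x_1>0\}$, and $c(x_0) = r e_1$, so that $B_{x_0}$ is tangent to $\partial D$ at $0$. For $y \in U$ close enough to $x_0$ that $d(y,\partial U) \leq r/2$, the projection $p(y) \in \partial U \cap V$ is defined and I may form $\phi_{p(y)}$. Writing $y = p(y) + s(c(p(y)) - p(y))$ with $s \in (0,1]$ and unpacking the definition of $\phi_{p(y)}$ — where $R_{p(y)}$ is the rotation centered at $c(x_0) = re_1$ sending $p(y) + c(x_0) - c(p(y))$ to $0$ — a direct computation yields
$$\phi_{p(y)}(y) \;=\; |y - p(y)|\, e_1 \;=:\; \lambda_y e_1.$$

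Before applying Proposition~\ref{progress}, I must check that $p(y) \to x_0$ (hence $\lambda_y \to 0$) as $y \to x_0$. For any sequence $y_n \in U$ with $y_n \to x_0$, extract a subsequential limit $z \in \partial U \cap \overline{V}$ of $(p(y_n))$ (after shrinking $V$ to be bounded). Passing to the limit in $y_n = p(y_n) + s_n(c(p(y_n)) - p(y_n))$, using continuity of $c$, places $x_0$ on the closed segment $[z,c(z)] \subset \overline{B_z} \subset \overline{U}$; since $B_z \subset U$ is open and $x_0 \in \partial U$, this forces $x_0 = z$. Therefore $p(y_n) \to x_0$, $c(p(y_n)) \to re_1$, the rotations $R_{p(y_n)}$ converge to the identity, and $\lambda_n := \lambda_{y_n} \to 0$.

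Set $U_n := \phi_{p(y_n)}(U)$. Properties~\eqref{A} and~\eqref{B} give $B_{x_0} \subset U_n \subset D$ for all $n$ and $U_n \cap B(0,R) \to U \cap B(0,R)$ for every $R > 0$. Invariance of Brownian motion under rigid motions identifies the pushforward of $\W^U_{y_n,1}$ by the path-level map $F_n(w)(\cdot) := \phi_{p(y_n)}(w(\cdot))$ with $\W^{U_n}_{\lambda_n e_1, 1}$. Proposition~\ref{progress} then yields
$$\W^{U_n}_{\lambda_n e_1, 1} \;\Rightarrow\; \W^U_{x_0, 1}.$$
To transfer back I use $G_n := F_n^{-1}$. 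Since $\phi_{p(y_n)}^{-1}$ converges to the identity uniformly on compacts of $\RR^d$, the induced maps $G_n : \C_\infty \to \C_\infty$ satisfy $G_n(w_n) \to w$ whenever $w_n \to w$. The extended continuous mapping theorem then delivers
$$\W^U_{y_n, 1} \;=\; \W^{U_n}_{\lambda_n e_1, 1} \circ G_n^{-1} \;\Rightarrow\; \W^U_{x_0, 1},$$
which is the desired convergence since the sequence was arbitrary.

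I expect the main obstacle to be the geometric bookkeeping: verifying the explicit formula $\phi_{p(y)}(y) = \lambda_y e_1$, the claim $p(y) \to x_0$, and the uniform-on-compacts convergence $\phi_{p(y)} \to \mathrm{Id}$. None of these is deep, but care is needed with the rotation $R_{p(y)}$. Once this elementary geometry is dispatched, Proposition~\ref{progress} (which itself rests on the ball estimate, Lemma~\ref{CONTRPOURBOULTPSSORT}) does all the probabilistic heavy lifting and the rest is standard weak-convergence machinery.
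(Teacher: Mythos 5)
Your proof is correct and follows essentially the same route as the paper: reduce to Proposition~\ref{progress} via the rigid motions $\phi_{p(y)}$ attached to the nice-set structure, and then transfer back with the extended continuous mapping theorem using $\phi_{p(y)}\to\mathrm{Id}$ uniformly on compacts. You supply two worthwhile details the paper leaves implicit — the explicit identity $\phi_{p(y)}(y)=|y-p(y)|\,e_1$ and the verification that $p(y)\to x_0$ (needed since continuity of $p$ is only assumed on the interior, not up to the boundary) — but the argument is the same.
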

\begin{proof}
For $y$ close to $x_0$, set $x=p(y)$.
Since $y$ belongs to $(x,c(x)]$, the point $q(y)=\phi_x(y)$ belongs to $(x_0,c(x_0)]$. Moreover, $q(y)$ tends to $x_0$ as $y\to x_0$.
Thus, from \eqref{A} and \eqref{B} together with Proposition~\ref{progress}, we obtain
$$\W^{U_x}_{q(y),1}\Rightarrow \W^U_{x_0,1},\quad\mbox{as }y\to x_0\;.$$
Now by the invariance properties of Brownian motion, we have
$$\W^U_{y,1}=\W^{U_x}_{q(y),1}\circ\phi_x\;.$$
Since $\phi_x$ tends to the identity mapping as $x\to x_0$, uniformly on compact subsets of $\RR^d$, it follows from the continuous mapping theorem that 
$$\W^U_{y,1}\Rightarrow \W^U_{x_0,1},\quad\mbox{as }y\to x_0\;.$$
\end{proof}

\section{Proof of Theorem~\ref{mainthm}}\label{ccones}
Let $d\geq 2$ and let $C\subset\RR^d$ be an  open  cone with vertex at $0$.
We will say that $C$ is a \textit{nice cone} if  it is nice (see~\ref{nice}) at any point of its boundary, excepting $0$.
For example, any two-dimensional convex cone is nice. In higher dimension, any circular cone or ellipsoidal
cone is nice.

We note two important facts about nice cones:
\begin{enumerate}
\item If $C$ is a nice cone, then it is a Lipschitz cone;
\item If $C$ is a nice cone, $\partial C$ is  a null set with respect to Lebesgue measure.
\end{enumerate}
The proof of the first one is elementary but quite tedious, so we omit it here. Note that the second fact is  a consequence of the first one.
 
The following lemma which is an immediate consequence of Theorem~\ref{convnice} will play an important role in the rest of this article.

\begin{lemma}\label{prolbord} Suppose $C$ is a nice cone.
Let $x_0\in\partial C\setminus\{0\}$ and $t_0>0$. As $(x,t)\to(x_0,t_0)$,
$\W^C_{x,t}\Rightarrow \W^C_{x_0/\sqrt{t_0},1}\circ K_{t_0}^{-1}$.
\end{lemma}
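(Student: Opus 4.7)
My plan is to combine the scaling identity for cones, recorded in Remark~\ref{01Law}, with Theorem~\ref{convnice}, and to handle the fact that both the underlying measure and the pushforward map vary simultaneously via a Skorokhod representation argument.

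First I would recall that, because $C$ is a cone, Brownian scaling gives
$$\W^C_{x,t}=\W^C_{x/\sqrt{t},1}\circ K_t^{-1}$$
for all $x\in C$ and $t>0$, as was already observed in Remark~\ref{01Law}. Writing $\lambda=x/\sqrt{t}$ and $\lambda_0=x_0/\sqrt{t_0}$, the hypothesis $(x,t)\to(x_0,t_0)$ yields $\lambda\to\lambda_0$, and $\lambda_0\in\partial C\setminus\{0\}$ because $C$ is a cone and $x_0\in\partial C\setminus\{0\}$. Since $C$ is a nice cone, it is nice at $\lambda_0$ (and co-regular, as follows from the local interior-ball condition at every boundary point distinct from $0$), so Theorem~\ref{convnice} applies and gives
$$\W^C_{\lambda,1}\Rightarrow\W^C_{\lambda_0,1}\quad\text{as }\lambda\to\lambda_0.$$

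Next I would check that the scaling maps behave well jointly: the map $\Phi(t,w)(s)=w(ts)/\sqrt{t}$, defined on $(0,+\infty)\times\C_{\infty}$, is continuous (if $t_n\to t_0>0$ and $w_n\to w$ uniformly on compacts, then $w_n(t_n s)/\sqrt{t_n}\to w(t_0 s)/\sqrt{t_0}$ uniformly for $s$ in any compact interval, using the uniform convergence of $w_n$ on a slightly enlarged compact and the continuity of $w$). The main obstacle in the argument is precisely that one cannot apply the continuous mapping theorem directly to conclude $\W^C_{\lambda,1}\circ K_t^{-1}\Rightarrow\W^C_{\lambda_0,1}\circ K_{t_0}^{-1}$, because the pushforward map itself varies with $t$.

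To resolve this I would invoke the Skorokhod representation theorem: on some auxiliary probability space there exist random elements $W_n,W$ of $\C_{\infty}$ with laws $\W^C_{\lambda_n,1}$ and $\W^C_{\lambda_0,1}$ along any sequence $(x_n,t_n)\to(x_0,t_0)$, such that $W_n\to W$ almost surely. Joint continuity of $\Phi$ then gives $K_{t_n}^{-1}(W_n)=\Phi(t_n,W_n)\to\Phi(t_0,W)=K_{t_0}^{-1}(W)$ almost surely in $\C_{\infty}$, hence
$$\W^C_{x_n,t_n}=\W^C_{\lambda_n,1}\circ K_{t_n}^{-1}\Rightarrow\W^C_{\lambda_0,1}\circ K_{t_0}^{-1}=\W^C_{x_0/\sqrt{t_0},1}\circ K_{t_0}^{-1}.$$
Since this holds along every sequence, the claimed convergence as $(x,t)\to(x_0,t_0)$ follows.
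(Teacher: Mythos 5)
Your proof is correct and follows essentially the paper's route: the scaling identity, Theorem~\ref{convnice} applied at $\lambda_0=x_0/\sqrt{t_0}\in\partial C\setminus\{0\}$, and transfer of the convergence through the varying maps $K_t^{-1}$. Where you diverge is in handling the varying map: you claim the continuous mapping theorem ``cannot be applied directly'' because the pushforward varies with $t$, and substitute a Skorokhod representation argument. That argument is sound, but the concern is misplaced --- Theorem~2.7 of Billingsley, the very result the paper invokes for the identical pattern already in Remark~\ref{01Law}, is the \emph{extended} continuous mapping theorem, which allows a sequence of measurable maps $h_n$ provided $h_n(w_n)\to h(w)$ whenever $w_n\to w$ (off a limit-null set); that is precisely the joint continuity of $(t,w)\mapsto K_t^{-1}(w)$ that you verify. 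So your Skorokhod step amounts to re-proving the cited result rather than appealing to it, which costs a few lines but changes nothing mathematically. One small imprecision worth flagging: the co-regularity of a nice cone at boundary points follows from the \emph{exterior} supporting half-spaces $D_x$ in the definition of nice sets (equivalently from the Lipschitz property the paper records for nice cones), not from the interior tangent balls $B_x$ you cite, since an interior thickness condition says nothing about regularity of the exterior.
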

\begin{proof} By the scaling property of $\W^C$ (Remark~\ref{01Law}), we have
$\W^C_{x,t}=\W^C_{x/\sqrt{t},1}\circ K_t^{-1}$.
The result simply follows from Theorem~\ref{convnice} together with the continuous mapping theorem.
\end{proof}

\subsection{Convergence of the finite-dimensional distributions}

We will prove in this section that the finite-dimensional distributions of 
$\W^C_{x,1}$ converge weakly as $x\in C\to 0$. 
Recall that for any  $t\in(0,1]$ the law $\W^C_{x,1}(X_t\in dy)$ has the density $e_x(t,y)$ given by
$$e_x(t,y)=\frac{p^C(t,x,y)}{W_{x}(\tau_C>1)} W_y(\tau_C>1-t)\;.$$
By using an expansion of the heat kernel $p^C(t,x,y)$ of $C$  given by
Ba{\~n}uelos and Smits in~\cite{BS97}, we shall prove that $e_x(t,y)$ converges to a limit density $e(t,y)$, as  $x\in C\to 0$.
 
Before we recall their result, let us introduce some notations. 
Let $\O$ be the intersection of the cone $C$ with the unit sphere $\SS^{d-1}$ and assume that it is a regular set for the Dirichlet problem with respect to the Laplace-Beltrami operator $L$ on $\SS^{d-1}$.
Then there exists a complete set of orthonormal eigenfunctions $m_j$ with corresponding eigenvalues 
$0<\lambda_1<\lambda_2\leq \lambda_3\leq\cdots$ satisfying
$$
\begin{cases}
L m_j=-\lambda_j m_j & \mbox{on }\O\;;\\
m_j=0 &\mbox{on }\partial \O\;.
\end{cases}
$$ 
Set $\alpha_j=\sqrt{\lambda_j+(\frac{d}{2}-1)^2}$.
We will use the following facts that are proved in~\cite{BS97}~:
\begin{itemize}
\item there exist two constants $0<c_1<c_2$ such that 
\begin{equation}\label{croissalp}
\forall j\geq 1,\quad c_1j^{\frac{1}{d-1}}\leq \alpha_j \leq c_2 j^{\frac{1}{d-1}}\;;
\end{equation}
\item there exists a constant $c$ such that 
\begin{equation}\label{majfoncprop}
\forall j\geq 1,\quad \Vert m_j \Vert_{\infty}\leq c\alpha_j^{\frac{d-1}{2}}\;;
\end{equation}
\item if $C$ is a Lipschitz cone, then there exists a constant $c'$ such that
\begin{equation}\label{compfoncprop}
\forall j\geq 1, \forall \eta\in \O,\quad m_j^2(\eta)\leq \frac{c'm_1^2(\eta)}{I_{\alpha_j}(1)}\;,
\end{equation}

\end{itemize}
where $I_{\nu}$ is the modified Bessel function of order $\nu$~:
\begin{eqnarray}\label{bessel}
I_{\nu}(x)&=&\frac{2(\frac{x}{2})^{\nu}}{\sqrt{\pi}\Gamma(\nu+\frac{1}{2})}\int_{0}^{\frac{\pi}{2}}(\sin t)^{2\nu}\cosh(x\cos t)\,dt\\
&=&\sum_{m=0}^{\infty}\frac{(\frac{x}{2})^{\nu+2m}}{m!\Gamma(\nu+m+1)}\nonumber\;.
\end{eqnarray}
Then we have the following lemma~:
\begin{lemma}[\cite{BS97}, Lemma 1]
Write $x=\rho\theta$, $y=r\eta$, $\rho$, $r>0$, $\theta$, $\eta\in \O$. We have
$$p^C(t,x,y)=\frac{e^{-\frac{(r^2+\rho^2)}{2t}}}{t(\rho r)^{\frac{d}{2}-1}}
\sum_{j=1}^{\infty} I_{\alpha_j}\left(\frac{\rho r}{t}\right)m_j(\theta)m_j(\eta)\;,$$
where the convergence is uniform for $(t,x,y)\in [T,\infty)\times\{x\in C:\Vert x\Vert\leq R\}\times C$, for any positive constants $T$ and $R$.
\end{lemma}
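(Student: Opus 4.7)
The plan is to derive the formula by separating variables in spherical coordinates and identifying each angular mode with a Bessel-type radial heat kernel. Write $y=r\eta$ and $x=\rho\theta$ with $r,\rho>0$ and $\theta,\eta\in\mathcal{O}$. In these coordinates the Laplacian decomposes as $\Delta=\partial_r^2+\frac{d-1}{r}\partial_r+\frac{1}{r^2}L$, so that if we expand the heat kernel in the complete orthonormal basis $\{m_j\}$ of $L^2(\mathcal{O})$ determined by the Dirichlet eigenproblem for $L$, we may write formally
\begin{equation*}
p^C(t,x,y)=\sum_{j\geq 1}q_j(t,\rho,r)\,m_j(\theta)\,m_j(\eta),
\end{equation*}
where each radial coefficient $q_j$ is the fundamental solution of
\begin{equation*}
\partial_t q_j=\tfrac{1}{2}\bigl(\partial_r^2+\tfrac{d-1}{r}\partial_r-\tfrac{\lambda_j}{r^2}\bigr)q_j
\end{equation*}
on $(0,\infty)$ with the Dirichlet-type boundary condition forcing $q_j$ to kill any paths hitting the vertex (which is polar in dimension $d\ge 2$).

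Next I would solve the radial equation explicitly. Substituting $q_j(t,\rho,r)=r^{-(d/2-1)}\rho^{-(d/2-1)}V_j(t,\rho,r)$ turns it into the standard modified Bessel heat equation of order $\nu=\sqrt{\lambda_j+(d/2-1)^2}=\alpha_j$, whose fundamental kernel is well known to be
\begin{equation*}
V_j(t,\rho,r)=\frac{1}{t}\,\exp\!\Bigl(-\tfrac{r^2+\rho^2}{2t}\Bigr)\,I_{\alpha_j}\!\Bigl(\tfrac{\rho r}{t}\Bigr),
\end{equation*}
a fact one may either quote from the theory of radial Bessel processes or verify directly by substitution using the series definition~\eqref{bessel} and the recurrence relation $I'_\nu(x)+\tfrac{\nu}{x}I_\nu(x)=I_{\nu-1}(x)$. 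Recombining the factors yields exactly the term appearing in the lemma.

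The main obstacle is to justify that the formal expansion actually represents $p^C$ and converges in the claimed uniform sense. For the pointwise identification one shows both sides are nonnegative solutions of the heat equation on $C$ with Dirichlet boundary and the correct $\delta$-initial condition, invoking uniqueness for the Cauchy--Dirichlet problem. For uniform convergence on $[T,\infty)\times\{\|x\|\le R\}\times C$, I would control a generic term by combining \eqref{majfoncprop} with the bound $I_{\alpha_j}(z)\leq (z/2)^{\alpha_j}e^{z}/\Gamma(\alpha_j+1)$ (immediate from~\eqref{bessel}) to obtain
\begin{equation*}
\Bigl|I_{\alpha_j}\!\Bigl(\tfrac{\rho r}{t}\Bigr)m_j(\theta)m_j(\eta)\Bigr|\,e^{-(r^2+\rho^2)/2t}
\le c^2\,\alpha_j^{d-1}\,\frac{(\rho r/2t)^{\alpha_j}}{\Gamma(\alpha_j+1)}\,e^{-(r-\rho)^2/2t}.
\end{equation*}
Using the growth~\eqref{croissalp} and Stirling's formula, the right-hand side is summable in $j$ uniformly in the prescribed region; the factor $e^{-(r-\rho)^2/2t}$ handles the unboundedness in $r$. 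The bound~\eqref{compfoncprop} (valid in the Lipschitz case) is what lets one push the argument uniformly down to the boundary of the sphere, avoiding deterioration of $\|m_j\|_\infty$ there. Once this domination is in place, termwise validity of the PDE and the boundary conditions transfers to the sum and identifies it with $p^C$.
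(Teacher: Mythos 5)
The paper does not prove this statement; it is quoted verbatim as Lemma~1 of Ba\~nuelos and Smits~\cite{BS97}. There is therefore no proof of record in the paper to compare against, and one should judge your sketch on its own merits against the original source.

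Your separation-of-variables argument is the standard (and essentially the Ba\~nuelos--Smits) route: expand in the $L^2(\O)$ Dirichlet eigenbasis of $L$, reduce to a one-parameter family of radial problems, and recognize the conjugated radial kernel as the Bessel heat kernel of order $\alpha_j=\sqrt{\lambda_j+(d/2-1)^2}$. The key substitution $q_j=(r\rho)^{-(d/2-1)}V_j$ and the resulting equation $\partial_t V_j=\tfrac12\bigl(\partial_r^2+\tfrac1r\partial_r-\tfrac{\alpha_j^2}{r^2}\bigr)V_j$ are correct, and your domination estimate combining~\eqref{majfoncprop} with $I_{\alpha_j}(z)\le (z/2)^{\alpha_j}e^{z}/\Gamma(\alpha_j+1)$ is exactly what makes the tail sum go, via~\eqref{croissalp} and Stirling; the factor $e^{-(r-\rho)^2/2t}$ does control large $r$ once one notes that for $r>4\rho$ the exponent $(r-\rho)^2/2t-\rho r/2t$ dominates $\rho r/2t$.

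Two small points deserve correction. First, the bound~\eqref{compfoncprop} plays no role in establishing this lemma: uniform convergence over $C$ needs only the global sup-norm bound~\eqref{majfoncprop}, since one never divides by $m_1$. In the paper, \eqref{compfoncprop} (valid under the Lipschitz hypothesis) is reserved for the subsequent Lemmas~\ref{equibrotue} and~\ref{domination}, where one must control the ratio $m_j(\theta)/m_1(\theta)$ uniformly in $\theta\in\O$. Your remark suggests it is needed to tame $\|m_j\|_\infty$ near $\partial\O$, but that is not what it does. Second, verifying the radial PDE directly is more naturally done with the modified Bessel ODE $z^2I_\nu''+zI_\nu'-(z^2+\nu^2)I_\nu=0$ rather than the recurrence $I_\nu'+\tfrac{\nu}{z}I_\nu=I_{\nu-1}$; the latter works but introduces an extraneous index. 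Beyond these, the sketch correctly captures the proof outline of the cited result.
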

Together with the expression of $I_{\alpha_j}$, this suggests that $p^C(t,x,y)$ is equivalent at $x=0$ to the product  $g(x)h(t,y)$ where 
$$g(x)=\rho^{\alpha_1-(\frac{d}{2}-1)}m_1(\theta)
\quad\mbox{ and }\quad
h(t,y)=\frac{r^{\alpha_1-(\frac{d}{2}-1)}e^{-\frac{r^2}{2t}}}{2^{\alpha_1}\Gamma(\alpha_1+1)t^{\alpha_1+1}}m_1(\eta)\;.$$
In fact, we have the following~:
\begin{lemma}\label{equibrotue}
For $x=\rho\theta$, $y=r\eta$, $\rho$, $r>0$, $\theta$, $\eta\in \O$, we have
$$\lim_{\rho\to 0}\frac{p^C(t,x,y)}{g(x)}=
h(t,y)\;,$$
uniformly in $(t,r,\theta,\eta)\in [T,\infty)\times [0,R]\times \O\times \O$, for any positive constants $T$ and $R$.
\end{lemma}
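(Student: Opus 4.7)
The plan is to start from the Ba\~nuelos--Smits series for $p^C(t,x,y)$, isolate the $j=1$ term, and argue that it both reproduces the product $g(x)h(t,y)$ in the limit and dominates the rest of the series as $\rho\to 0$.

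\emph{First I would handle the leading term.} The Bessel series \eqref{bessel} yields, for bounded argument,
$$I_{\alpha_1}(\rho r/t)=\frac{(\rho r/(2t))^{\alpha_1}}{\Gamma(\alpha_1+1)}\bigl(1+O((\rho r/t)^2)\bigr),$$
with the implicit constant depending only on $\alpha_1$. Substituting into the $j=1$ term of the Ba\~nuelos--Smits expansion and dividing by $g(x)=\rho^{\alpha_1-(d/2-1)}m_1(\theta)$, the factor $\rho^{\alpha_1-(d/2-1)}m_1(\theta)$ cancels out and one obtains
$$\frac{e^{-(r^2+\rho^2)/(2t)}\, r^{\alpha_1-(d/2-1)}m_1(\eta)}{2^{\alpha_1}\Gamma(\alpha_1+1)\,t^{\alpha_1+1}}\bigl(1+O((\rho r/t)^2)\bigr).$$
Since $\alpha_1>d/2-1$, the quantity $r^{\alpha_1-(d/2-1)}$ is uniformly bounded for $r\in[0,R]$. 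Using $|e^{-\rho^2/(2t)}-1|\leq \rho^2/(2T)$ and $t\geq T$, this expression tends to $h(t,y)$ uniformly in $(t,r,\theta,\eta)\in[T,\infty)\times[0,R]\times\O\times\O$.

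\emph{Next I would control the tail $\sum_{j\geq 2}$.} The elementary bound $(\rho r/(2t))^{\alpha_j+2m}\leq (\rho r/t)^{\alpha_j}\cdot(1/2)^{\alpha_j+2m}$ applied term by term in \eqref{bessel} gives
$$I_{\alpha_j}(\rho r/t)\leq (\rho r/t)^{\alpha_j}\,I_{\alpha_j}(1),\qquad \rho r/t\leq 1.$$
Combining this with the comparison \eqref{compfoncprop} (applied to both $\theta$ and $\eta$), the tail contribution to $p^C(t,x,y)/g(x)$ is bounded, after the cancellation $(\rho r)^{d/2-1}\rho^{\alpha_1-(d/2-1)}=\rho^{\alpha_1}r^{d/2-1}$, by
$$\frac{c'\,|m_1(\eta)|}{t\,\rho^{\alpha_1}r^{d/2-1}}\sum_{j\geq 2}(\rho r/t)^{\alpha_j}.$$
Factoring out $(\rho r/t)^{\alpha_2}$ and using $\rho r/t\leq 1/2$ for $\rho$ small enough, each remaining power $(\rho r/t)^{\alpha_j-\alpha_2}$ is at most $(1/2)^{\alpha_j-\alpha_2}$, and by the Weyl estimate \eqref{croissalp}, $\sum_{j\geq 2}(1/2)^{\alpha_j-\alpha_2}<\infty$. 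One is left with a quantity of order $\rho^{\alpha_2-\alpha_1}r^{\alpha_2-(d/2-1)}/t^{\alpha_2+1}$, which, since $\alpha_2>\alpha_1$ and $\alpha_2>d/2-1$, tends to $0$ uniformly on the prescribed parameter range.

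\emph{The main obstacle} is exactly the uniform control of the tail: one needs that neither the spherical eigenfunctions $m_j$ nor the accumulation of Bessel terms can destroy the smallness gained from the factor $\rho^{\alpha_2-\alpha_1}$. This is why the three Ba\~nuelos--Smits estimates \eqref{croissalp}, \eqref{majfoncprop}, \eqref{compfoncprop} are used together: \eqref{compfoncprop} absorbs the $m_j$'s into a bounded factor times $1/\sqrt{I_{\alpha_j}(1)}$, which cancels the $I_{\alpha_j}(1)$ produced by the Bessel comparison, and \eqref{croissalp} provides the geometric-in-$j$ decay needed for summability. Once both parts are in place, combining them gives the claimed uniform convergence.
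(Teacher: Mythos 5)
Your argument is correct and follows essentially the same route as the paper: start from the Ba\~nuelos--Smits expansion, use \eqref{compfoncprop} (applied at both $\theta$ and $\eta$) to absorb the eigenfunction ratios into a factor $c'/I_{\alpha_j}(1)$, bound $I_{\alpha_j}(\rho r/t)$ by a power of $\rho r/t$ times $I_{\alpha_j}(1)$ so the $I_{\alpha_j}(1)$'s cancel, and invoke \eqref{croissalp} for summability. The only cosmetic differences are that you normalize by $g(x)$ rather than $g(x)h(t,y)$, derive the Bessel comparison from the power series rather than the integral representation, and isolate the $j=1$ term and bound the tail rather than passing to the limit term by term in a uniformly convergent series; these amount to the same estimates.
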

\begin{proof} 
Throughout this proof, the letter $\kappa$ will denote some positive constant whose value may change from line to line.\\
Set $M=\frac{\rho r}{t}$. We have
$$\frac{p^C(t,x,y)}{g(x)h(t,y)}=2^{\alpha_1}\Gamma(\alpha_1+1)e^{-\frac{\rho^2}{2t}}\sum_{j=1}^{\infty}\frac{I_{\alpha_j}(M)}{M^{\alpha_1}}
\frac{m_j(\theta)}{m_1(\theta)}\frac{m_j(\eta)}{m_1(\eta)}\;.$$
Using relation~(\ref{compfoncprop}), we get
$$\left\vert \frac{I_{\alpha_j}(M)}{M^{\alpha_1}}\frac{m_j(\theta)}{m_1(\theta)}\frac{m_j(\eta)}{m_1(\eta)}\right\vert
\leq\frac{\kappa}{M^{\alpha_1}}\frac{I_{\alpha_j}(M)}{I_{\alpha_j}(1)}\;.$$
Now, using the integral expression for $I_{\alpha_j}$, we see that
$$\frac{I_{\alpha_j}(M)}{I_{\alpha_j}(1)}\leq M^{\alpha_j}\cosh{M}\;.$$
Hence
$$\left\vert \frac{I_{\alpha_j}(M)}{M^{\alpha_1}}\frac{m_j(\theta)}{m_1(\theta)}\frac{m_j(\eta)}{m_1(\eta)}\right\vert
\leq\kappa M^{\alpha_j-\alpha_1}\cosh{M}\;.$$
From relation~(\ref{croissalp}), it is easily seen that the series  $\sum_j M^{\alpha_j-\alpha_1}\cosh{M}$ is uniformly convergent on $[0,1-\epsilon]$. So, the series
$$\sum_j \frac{I_{\alpha_j}(M)}{M^{\alpha_1}}\frac{m_j(\theta)}{m_1(\theta)}\frac{m_j(\eta)}{m_1(\eta)}$$
is uniformly convergent for $(M,\theta,\eta)\in [0,1-\epsilon]\times \O\times \O$.
Therefore we can take the limit term by term to obtain
$$\lim_{M\to 0}\sum_{j=1}^{\infty} \frac{I_{\alpha_j}(M)}{M^{\alpha_1}}\frac{m_j(\theta)}{m_1(\theta)}\frac{m_j(\eta)}{m_1(\eta)}=\frac{1}{2^{\alpha_1}\Gamma(\alpha_1+1)}\;,$$
where the convergence is uniform for $(\theta,\eta)\in \O\times \O$. 
\end{proof}

\begin{lemma}\label{domination}
The  function of  $y$
$$\sup_{\Vert x\Vert\leq \frac{1}{2}}\left \vert \frac{p^C(1,x,y)}{g(x)}\right\vert$$
is integrable on $\RR^d$.
\end{lemma}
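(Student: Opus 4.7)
The plan is to dominate the series expansion of $p^C(1,x,y)/g(x)$ term by term, uniformly in $x\in C$ with $\|x\|\leq 1/2$, by an explicit function of $y$ whose integrability on $\RR^d$ is then verified by direct computation. Writing $x=\rho\theta$ and $y=r\eta$ with $\rho,r>0$ and $\theta,\eta\in\O$, the expansion from the preceding lemma, together with the pointwise bounds~\eqref{majfoncprop} and~\eqref{compfoncprop}, yields
\[
\left|\frac{p^C(1,x,y)}{g(x)}\right|\leq \frac{\kappa\,e^{-(r^2+\rho^2)/2}}{r^{d/2-1}\rho^{\alpha_1}}\sum_{j\geq 1}\frac{I_{\alpha_j}(\rho r)\,\alpha_j^{(d-1)/2}}{\sqrt{I_{\alpha_j}(1)}},
\]
for a constant $\kappa$ independent of $x,y,j$, since the previous lemmas' estimates combine to give $|m_j(\theta)m_j(\eta)/m_1(\theta)|\leq \kappa\,\alpha_j^{(d-1)/2}/\sqrt{I_{\alpha_j}(1)}$.

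Next I would estimate the Bessel quotient. The integral representation~\eqref{bessel} with $\cosh(M\cos t)\leq e^M$ gives $I_{\alpha_j}(M)\leq (M/2)^{\alpha_j}e^M/\Gamma(\alpha_j+1)$, while keeping only the $m=0$ term of the power series gives $I_{\alpha_j}(1)\geq (1/2)^{\alpha_j}/\Gamma(\alpha_j+1)$. Plugging these in and using $\rho\leq 1/2$, $\alpha_j\geq\alpha_1$, and $\rho r\leq r/2$ then produces the uniform-in-$x$ estimate
\[
\sup_{\|x\|\leq 1/2}\left|\frac{p^C(1,x,y)}{g(x)}\right|\leq \frac{\kappa\,e^{-r^2/2+r/2}}{r^{d/2-1}}\sum_{j\geq 1}\frac{(r/\sqrt{2})^{\alpha_j}\alpha_j^{(d-1)/2}}{\sqrt{\Gamma(\alpha_j+1)}}.
\]

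The last step is to control the remaining series. Stirling gives $\sqrt{\Gamma(\alpha_j+1)}\geq c\,\alpha_j^{\alpha_j/2}e^{-\alpha_j/2}$, whence each summand is bounded by $\alpha_j^{(d-1)/2}\,e^{\alpha_j/2}(r/\sqrt{2\alpha_j})^{\alpha_j}$. Elementary calculus shows that the function $\alpha\mapsto e^{\alpha/2}(r/\sqrt{2\alpha})^{\alpha}$ attains its maximum at $\alpha=r^2/2$ with value $e^{r^2/4}$ and decays super-exponentially away from this maximum, while the lower bound~\eqref{croissalp} implies that only polynomially many $\alpha_j$ lie in the effective support. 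The series is therefore bounded by a polynomial $P(r)$ times $e^{r^2/4}$, and the overall upper bound becomes $P(r)\,e^{-r^2/4+r/2}/r^{d/2-1}$. This is integrable on $\RR^d$: in polar coordinates, the factor $r^{d-1}/r^{d/2-1}=r^{d/2}$ is integrable near the origin, and the Gaussian $e^{-r^2/4}$ dominates any polynomial at infinity.

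The main obstacle is that the elementary bound on $I_{\alpha_j}(\rho r)$ itself grows like $(r/\sqrt{2})^{\alpha_j}$ and is badly behaved for large $\alpha_j$ and moderate-to-large $r$; the argument really rests on Stirling producing super-exponential decay in $\alpha_j$ that beats this growth. One loses a factor of $e^{r^2/4}$ to this compensation, but the $e^{-r^2/2}$ from the heat kernel still leaves enough Gaussian decay in $r$ to close the integrability argument.
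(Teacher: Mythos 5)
Your proposal is correct and follows essentially the same route as the paper: after inserting the eigenfunction bounds \eqref{majfoncprop}--\eqref{compfoncprop}, you control the Bessel ratio $I_{\alpha_j}(\rho r)/\sqrt{I_{\alpha_j}(1)}$ by elementary estimates plus Stirling, take the supremum over $\rho\le 1/2$ using $\alpha_j\geq\alpha_1$, and sum the remaining series with the help of the index count \eqref{croissalp}. The details differ only cosmetically---the paper keeps $\cosh(\rho r)$ and the Wallis factor $\omega_{\alpha_j}$ explicit and bounds the tail by bucketing the $\alpha_j$ into integer intervals, whereas you pass through $\cosh\le e^M$ and $\Gamma(\alpha_j+1)$ and maximize the continuous function of $\alpha$---but in both versions a factor $e^{cr^2}$ with $c<1/2$ emerges and is absorbed by the $e^{-r^2/2}$ coming from the heat kernel.
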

\begin{proof} 
Using relations~(\ref{majfoncprop}) and~(\ref{compfoncprop}), we get
\begin{eqnarray*}
\left \vert \frac{p^C(1,x,y)}{g(x)}\right\vert & \leq & \frac{e^{-\frac{(\rho^2+r^2)}{2}}}{r^{\frac{d}{2}-1}\rho^{\alpha_1}}\sum_{j=1}^{\infty}
I_{\alpha_j}(\rho r)\left\vert \frac{m_j(\theta)}{m_1(\theta)} m_j(\eta)\right\vert\\
 & \leq & \frac{e^{-\frac{r^2}{2}}}{r^{\frac{d}{2}-1}\rho^{\alpha_1}}\sum_{j=1}^{\infty}
\frac{I_{\alpha_j}(\rho r)}{I_{\alpha_j}(1)^{\frac{1}{2}}}\alpha_j^{\frac{d-1}{2}}\;.
\end{eqnarray*}
Set $\omega_{\alpha_j}=\int_{0}^{\frac{\pi}{2}}(\sin t)^{2\alpha_j}\,dt$. Using the integral expression for $I_{\alpha_j}$, we find that
$$\frac{I_{\alpha_j}(\rho r)}{I_{\alpha_j}(1)^{\frac{1}{2}}}\leq
\kappa \cosh(\rho r)\left(\frac{\rho r}{\sqrt{2}}\right)^{\alpha_j}\frac{\sqrt{\omega_{\alpha_j}}}{\Gamma(\alpha_j+\frac{1}{2})^{\frac{1}{2}}}\;.$$
Since $\int_{0}^{\frac{\pi}{2}}(\sin t)^{2n}\,dt\sim cn^{-\frac{1}{2}}$ as $n\to\infty$, we have $\omega_{\alpha_j}\sim c\alpha_j^{-\frac{1}{2}}$ as $j\to\infty$. From Stirling's Formula we also get $\Gamma(\alpha_j+\frac{1}{2})\geq c\alpha_j^{\alpha_j}e^{-\alpha_j}$. Thus,
$$\frac{I_{\alpha_j}(\rho r)}{I_{\alpha_j}(1)^{\frac{1}{2}}}\leq
\kappa \cosh(\rho r)\left(\frac{\sqrt{e}\rho r}{\sqrt{2}}\right)^{\alpha_j}\frac{\alpha_j^{-\frac{1}{4}}}{\alpha_j^{\alpha_j/2}}\;.$$
Therefore,
\begin{eqnarray}\label{anniv}
\left \vert \frac{p^C(1,x,y)}{g(x)}\right\vert & \leq & \kappa \frac{e^{-\frac{r^2}{2}}}{r^{\frac{d}{2}-1}\rho^{\alpha_1}}\cosh(\rho r)\sum_{j=1}^{\infty}
 \left(\frac{\sqrt{e}\rho r}{\sqrt{2}}\right)^{\alpha_j}\frac{\alpha_j^{\frac{2d-3}{4}}}{\alpha_j^{\alpha_j/2}}\;.
\end{eqnarray}
Since $\alpha_j\geq\alpha_1$, the right-hand side of~\eqref{anniv} is increasing with $\rho$, so
\begin{eqnarray*}
\sup_{\rho\leq \frac{1}{2}}\left \vert \frac{p^C(1,x,y)}{g(x)}\right\vert & \leq & \kappa \frac{e^{-\frac{r^2}{2}}}{r^{\frac{d}{2}-1}}\cosh\left(\frac{r}{2}\right)\sum_{j=1}^{\infty}
 \left(\frac{\sqrt{e} r}{2\sqrt{2}}\right)^{\alpha_j}\frac{\alpha_j^{\frac{2d-3}{4}}}{\alpha_j^{\alpha_j/2}}=:f(r)\;.
\end{eqnarray*}
Because  $\alpha_j>\left(\frac{d}{2}-1\right)$, the function $f$ is integrable on any compact subset of $[0,+\infty)$. 
We shall now find an upper bound for the sum that appears in the definition of $f$ for large values of $r$.
Let $M\geq 1$. For $2n\leq \alpha_j\leq 2n+1$, we have
$$M^{\alpha_j}\frac{\alpha_j^{\frac{2d-3}{4}}}{\alpha_j^{\alpha_j/2}}\leq M^{2n+1}\frac{(2n+1)^{\frac{2d-3}{4}}}{(2n)^n}=M (M^2/2)^n 
\frac{(2n+1)^{\frac{2d-3}{4}}}{n^n}\;.$$
Since $\alpha_j>c_1 j^{\frac{1}{d-1}}$, the number of indices $j$ for which  $\alpha_j\leq 2n+1$ is bounded by $\left(\frac{2n+1}{c_1}\right)^{d-1}$. Thus, there exists $K=K(d)>0$ such that
$$\sum_{j=1}^{\infty}M^{\alpha_j}\frac{\alpha_j^{\frac{2d-3}{4}}}{\alpha_j^{\alpha_j/2}}\leq
M\sum_{n=1}^{\infty}(M^2/2)^n\frac{n^K}{n^n}\leq P(M)e^{M^2/2}\;,$$
where $P$ is a polynomial. Applying this result with $M=\frac{\sqrt{e}r}{2\sqrt{2}}$ and $r\geq 2\sqrt{2/e}$ gives
\begin{eqnarray*}
f(r)& \leq & \kappa \frac{e^{-(1-e/8)\frac{r^2}{2}}}{r^{\frac{d}{2}-1}}\widetilde{P}(r) \cosh\left(\frac{r}{2}\right)\;,
\end{eqnarray*}
where $\widetilde P$ is a polynomial whose coefficients depend only on $d$.
This is sufficient to conclude the proof of Lemma~\ref{domination}.
\end{proof}

Recall that
$$
e_x(t,y)=\frac{p^C(t,x,y)}{\int p^C(1,x,z)\,dz}W_y(\tau_C>1-t)\;.
$$
Lemma~\ref{equibrotue} suggests that the limit as $x\to 0$ is the function
$$
e(t,y)=\frac{h(t,y)}{\int_Ch(1,z)\,dz}W_y(\tau_C>1-t)\;.
$$
By integrating in polar coordinates, it is easily seen that
$$\int_C h(1,z)\,dz=2^{-\frac{\alpha_1}{2}+\frac{d-2}{4}}\frac{\Gamma(\frac{\alpha_1}{2}+\frac{d+2}{4})}{\Gamma(\alpha_1+1)}\int_{\O}m_1(\eta)\,\sigma(d\eta)\;,$$
where $\sigma$ is Lebesgue measure on the unit sphere $\SS^{d-1}$.
Put $$c^{-1}=2^{\frac{\alpha_1}{2}+\frac{d-2}{4}}\Gamma\left(\frac{\alpha_1}{2}+\frac{d+2}{4}\right)\int_{\O} m_1(\eta)\,\sigma(d\eta)\;.$$
Then, for $y=r\eta$, we get
\begin{equation}\label{densite}
e(t,y)=
ct^{-\alpha_1-1} r^{\alpha_1-(\frac{d}{2}-1)}e^{-r^2/2t} m_1(\eta)\,W_y(\tau_C>1-t)\;.
\end{equation}
We have the following result:
\begin{lemma}\label{denslimpretran}For any $t\in (0,1]$, the function $e(t,y)$ is a probability density and
$e_x(t,y)\to e(t,y)$ as $\Vert x\Vert\to 0$.
\end{lemma}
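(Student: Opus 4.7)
The plan is to derive both assertions from Lemmas~\ref{equibrotue} and~\ref{domination} by two applications of dominated convergence. Writing
$$e_x(t,y) = \frac{p^C(t,x,y)/g(x)}{W_x(\tau_C>1)/g(x)}\, W_y(\tau_C>1-t),$$
Lemma~\ref{equibrotue} immediately gives $p^C(t,x,y)/g(x)\to h(t,y)$ pointwise as $\Vert x\Vert\to 0$. For the denominator, one observes that $W_x(\tau_C>1)/g(x) = \int p^C(1,x,z)/g(x)\,dz$; Lemma~\ref{equibrotue} supplies pointwise convergence of the integrand to $h(1,z)$ and Lemma~\ref{domination} supplies an integrable envelope valid uniformly for $\Vert x\Vert\leq 1/2$, so dominated convergence yields $W_x(\tau_C>1)/g(x)\to\int_C h(1,z)\,dz$. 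This limit is strictly positive since $m_1>0$ on $\Omega$ (it is the principal Dirichlet eigenfunction of $-L$ on $\Omega$), and consequently $e_x(t,y)\to e(t,y)$ at every $y\in C$.

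It remains to check that $\int e(t,y)\,dy = 1$ for each fixed $t\in(0,1]$. Since $\int e_x(t,y)\,dy = 1$ by the very definition of $e_x$ as the density of $\W^C_{x,1}(X_t\in dy)$, and the denominator $W_x(\tau_C>1)/g(x)$ has just been shown to converge to the positive constant $\int_C h(1,z)\,dz$, the identity will follow once one shows
$$\int \frac{p^C(t,x,y)}{g(x)}\, W_y(\tau_C>1-t)\,dy \;\longrightarrow\; \int h(t,y)\, W_y(\tau_C>1-t)\,dy.$$
I would establish this by dominated convergence, using Brownian scaling to transport the $t=1$ envelope of Lemma~\ref{domination} to arbitrary $t\in(0,1]$.

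Concretely, the identity $p^C(t,x,y) = t^{-d/2}\,p^C(1,x/\sqrt t,y/\sqrt t)$ together with the homogeneity $g(x) = t^{(\alpha_1-(d/2-1))/2}g(x/\sqrt t)$ yields
$$\frac{p^C(t,x,y)}{g(x)} = t^{-(\alpha_1+d/2+1)/2}\,\frac{p^C(1,x/\sqrt t,y/\sqrt t)}{g(x/\sqrt t)}.$$
As soon as $\Vert x\Vert\leq\sqrt t/2$, one has $\Vert x/\sqrt t\Vert\leq 1/2$, and Lemma~\ref{domination} then bounds the right-hand side by $t^{-(\alpha_1+d/2+1)/2}\,F(y/\sqrt t)$, where $F(z):=\sup_{\Vert x'\Vert\leq 1/2}|p^C(1,x',z)/g(x')|$ is integrable over $\RR^d$. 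The change of variable $z=y/\sqrt t$ shows this bound is integrable in $y$, and since $W_y(\tau_C>1-t)\leq 1$ the integrand is dominated by an integrable function; dominated convergence then delivers the displayed limit, hence $\int e(t,y)\,dy=1$. The only non-routine ingredient in this proof is the scaling step that lifts Lemma~\ref{domination} from $t=1$ to arbitrary~$t$; everything else is standard limit-passing.
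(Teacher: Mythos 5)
Your proof is correct, but it follows a genuinely different path from the paper's. The paper first establishes a uniform equi-integrability estimate for the family $\{\W^C_{x,1}(X_t\in dy):\Vert x\Vert\leq 1\}$ by a stopping-time decomposition at $\partial B(0,2)$ and a compactness-and-contradiction argument that invokes Lemma~\ref{prolbord} (the boundary convergence result for nice cones); combined with the local uniform convergence $e_x(t,\cdot)\to e(t,\cdot)$ on compacts, this yields $\int|e_x(t,y)-e(t,y)|\,dy\to 0$ outright, from which $\int e(t,y)\,dy=1$ follows. You instead stay entirely on the analytic side: you use the parabolic scaling $p^C(t,x,y)=t^{-d/2}p^C(1,x/\sqrt t,y/\sqrt t)$ and the homogeneity $g(x)=t^{(\alpha_1-(d/2-1))/2}g(x/\sqrt t)$ to transport the $t=1$ envelope of Lemma~\ref{domination} to arbitrary $t$, and then apply dominated convergence to both the numerator integral and the normalizing constant. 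Your computations of the scaling exponents check out, and the change of variables $z=y/\sqrt t$ does confirm integrability of the envelope, so the argument is complete. What the paper's route buys is the $L^1$ statement directly, which is exactly what is used in Proposition~\ref{COMARUNDIMBM} to pass to weak convergence of the first transition laws; your route gives pointwise convergence plus normalization, which recovers $L^1$ convergence via Scheff\'e's theorem, so nothing is lost downstream. Your argument is arguably cleaner and more self-contained, since it avoids appealing to Lemma~\ref{prolbord} and hence to the nice-cone machinery, relying only on the Lipschitz-cone estimates of Ba\~nuelos--Smits and scaling.
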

\begin{proof}
We shall first prove that the the family $\{e_x(t,y):\Vert x\Vert\leq 1\}$ is equi-integrable, that is
\begin{equation}\label{equiint}
\lim_{R\to\infty}\sup_{\Vert x\Vert\leq 1}\W^C_{x,1}(\Vert X_t\Vert> R)=0\;.
\end{equation}

Let $x\in C$ with $\Vert x\Vert\leq 1$ be given, and let $R>2$. We denote by $\rho=\tau_{B(0,2)}$ the first exit time from the ball $B(0,2)$. A continuous path started at $x$ that is outside $B(0,R)$ at time $t$ must have left $B(0,2)$ before that time, so
\begin{eqnarray*}
\lefteqn{\W^C_{x,1}(\Vert X_t\Vert >R)}\\
&=&\W^C_{x,1}(\rho<t;\W^C_{X_{\rho},1-\rho}(\Vert X_{t-s}\Vert>R)_{\vert s=\rho})\\
&\leq&\sup\left\{\W^C_{y,1-s}(\Vert X_{t-s}\Vert>R): y\in C,\Vert y\Vert=2\mbox{ and }s\in[0,t]\right\}\;.
\end{eqnarray*}
Suppose  the last expression does not tend to $0$ as $R\to \infty$; then there exist a sequence
$(y_n)\in C$ with $\Vert y_n\Vert=2$ and a sequence $(s_n)\in[0,t]$ such that 
\begin{equation}\label{contradict}
\liminf_{n\to\infty}\W^C_{y_n,1-s_n}(\Vert X_{t-s_n}\Vert >n)>0\;.
\end{equation}
Without loss of generality, we may suppose that $(y_n)$ converges to a point $y\in \close{C}$ with $\Vert y\Vert=2$, and that $(s_n)$ converges to $s\in[0,t]$. But Lemma~\ref{prolbord} (or Proposition~\ref{contthm} if $y\in C$) then implies that $\left(\W^C_{y_n,1-s_n}(X_{t-s_n}\in dy)\right)$ is a convergent sequence of probability measures : this contradicts~\eqref{contradict}. Thus, equation~\eqref{equiint} is proven.

It follows from Lemmas~\ref{equibrotue},~\ref{domination} and the dominated convergence theorem that 
\begin{equation}\label{equitepssor}
\lim_{\Vert x\Vert\to 0}\frac{\int p^C(1,x,z)\,dz}{g(x)}=\int h(1,z)\,dz\;.
\end{equation}
Since 
$$e_x(t,y)=\frac{p^C(t,x,y)}{g(x)}\frac{g(x)}{\int p^C(1,x,z)\,dz}W_y(\tau_C>1-t)\;,$$
we deduce from Lemma~\ref{equibrotue} and relation~\eqref{equitepssor} that $e_x(t,y)\to e(t,y)$, as $\Vert x\Vert\to 0$, uniformly on
$\{y\in C: \Vert y\Vert\leq R\}$, for any positive constant $R$.
Thus, it follows from~\eqref{equiint} and the integrability of $e(t,y)$ that
$$\limsup_{\Vert x\Vert\to 0}\int\left\vert e_x(t,y)-e(t,y)\right\vert\,dy=0\;.$$
This proves that the function $y\mapsto e(t,y)$ is a probability density. 
\end{proof}

\begin{proposition}\label{COMARUNDIMBM}
The finite-dimensional distributions of $\W^C_{x,1}$ converge weakly as $x\in C$ tends to $0$. Moreover, the limit distribution of the first transition law $\W^C_{x,1}(X_t\in dy)$, $t\in (0,1]$, has the density $e(t,y)$ given by equation~\eqref{densite}.
\end{proposition}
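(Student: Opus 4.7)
The plan is to reduce everything to Proposition~\ref{critconvmarg}, so that I only need to verify two things: that for each fixed $t\in(0,1)$ the one-dimensional marginal $\W^C_{x,1}(X_t\in dy)$ converges weakly as $x\in C\to 0$, and that the limit measure assigns mass zero to $\partial C$.

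First I would handle the convergence of the marginals. By the formula recalled just before the statement, $\W^C_{x,1}(X_t\in dy)$ has density $e_x(t,y)$ with respect to Lebesgue measure. Lemma~\ref{denslimpretran} already tells us two crucial facts: the pointwise (in fact locally uniform) convergence $e_x(t,y)\to e(t,y)$ as $\|x\|\to 0$, and the fact that $e(t,\cdot)$ is a probability density. Combined with the equi-integrability~\eqref{equiint} and Lemma~\ref{domination}, the proof of Lemma~\ref{denslimpretran} actually delivers $L^1$-convergence of the densities (this is essentially Scheffé's lemma). Weak convergence of the probability measures follows immediately.

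Next I verify the null-set condition. The limit $e(t,y)\,dy$ is absolutely continuous with respect to Lebesgue measure on $\RR^d$. Since $C$ is a nice cone, one of the two facts recorded at the start of Section~\ref{ccones} is that $\partial C$ has Lebesgue measure zero (as a consequence of $C$ being a Lipschitz cone). Therefore the limit measure assigns zero mass to $\partial C$, as required by the hypothesis of Proposition~\ref{critconvmarg}.

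With both conditions in hand, Proposition~\ref{critconvmarg} applied at the boundary point $x_0=0$ yields weak convergence of all the finite-dimensional distributions of $\W^C_{x,1}$ as $x\in C\to 0$. The identification of the one-dimensional marginal with $e(t,y)\,dy$ is automatic from the density convergence established above, giving the second assertion of the proposition. The only step that required real work was Lemma~\ref{denslimpretran}, which was already carried out; here everything is just assembly, and the expected main obstacle (checking that the limit of the marginals is genuinely a probability measure rather than losing mass at infinity) has been absorbed into the equi-integrability estimate~\eqref{equiint} that the proof of Lemma~\ref{denslimpretran} establishes via the boundary convergence Lemma~\ref{prolbord}.
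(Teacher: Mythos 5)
Your proposal is correct and follows essentially the same route as the paper: Lemma~\ref{denslimpretran} (which indeed establishes $L^1$ convergence of the densities, hence weak convergence of the marginals) combined with the fact that $\partial C$ is Lebesgue-null, then Proposition~\ref{critconvmarg} to upgrade to convergence of all finite-dimensional distributions. The only difference is that you spell out the Scheff\'e-type step and the role of the equi-integrability estimate, which the paper leaves implicit.
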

\begin{proof}
It follows from Lemma~\ref{denslimpretran} that the laws $\W^C_{x,1}(X_t\in dy)=e_x(t,y)\,dy$ converge weakly to
$e(t,y)\,dy$ as $x\in C$ tends to~$0$. The weak convergence of the finite-dimensional distributions then follows from Proposition~\ref{critconvmarg} since $\partial C$ has Lebesgue measure $0$.
\end{proof}

\subsection{Tightness}
For any $T>0$, the space $\C_T$ of continuous paths $w:[0,T]\to\RR^d$ is endowed with the topology generated by the supremum metric and the corresponding Borel $\sigma$-algebra.

\begin{proposition}\label{tenssuite}
For any sequence $(x_n)$ of points of~$C$ converging to~$0$ and for any $T>0$, the sequence of probability measures $(\W^C_{x_n,1})$ is tight in 
$\C_{T}$.
\end{proposition}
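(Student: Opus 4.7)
The plan is to verify the standard tightness criterion in $\C_T$ (\cite{Bil99}, Theorem 7.3): boundedness of $X_0$, which is immediate since $X_0 = x_n \to 0$, together with the modulus-of-continuity condition $\lim_{\delta \to 0}\limsup_n \W^C_{x_n, 1}(w_T(X, \delta) > \epsilon) = 0$ for every $\epsilon > 0$, where $w_T$ denotes the modulus on $[0,T]$. I would partition the time interval into $[0, \delta_0]$, $[\delta_0, 1]$ and $[1, T]$ for an auxiliary small $\delta_0 > 0$ and handle each piece separately.

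On $[1, T]$, the strong Markov property of $\W^C_{x_n, 1}$ at the deterministic time $1$ identifies the process as unconditioned Brownian motion started from $X_1$, and the law of $X_1$ converges (hence is tight) by Lemma~\ref{denslimpretran}; standard Brownian tightness dispatches this piece. On $[\delta_0, 1]$, the Markov property (Proposition~\ref{MarkovForteMBC}) at time $\delta_0$ gives that, conditionally on $X_{\delta_0} = y$, the law of $(X_{\delta_0 + s})_s$ is $\W^C_{y, 1 - \delta_0}$. The marginal of $X_{\delta_0}$ converges weakly to $e(\delta_0, y)\,dy$ (Lemma~\ref{denslimpretran}), a probability measure on $C$ with no mass on $\partial C$ nor in any neighbourhood of $0$, so for every $\eta > 0$ there exists a compact $K \subset \close C \setminus \{0\}$ with $\W^C_{x_n, 1}(X_{\delta_0} \in K) \geq 1 - \eta$ for all large $n$. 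On such a $K$, the map $y \mapsto \W^C_{y, 1 - \delta_0}$ is continuous — via Proposition~\ref{contthm} for $y \in C$ and via Theorem~\ref{convnice} for $y \in \partial C \setminus \{0\}$, since $C$ is nice at every such point — and thus the image family $\{\W^C_{y, 1-\delta_0} : y \in K\}$ is compact in the space of probability measures on $\C_\infty$ and therefore tight. Combining this with the Markov decomposition yields tightness on $[\delta_0, 1]$.

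The genuine obstacle is the short-time piece $[0, \delta_0]$, where one must prove that for every $\rho > 0$,
\[
\lim_{\delta_0 \to 0}\limsup_{n \to \infty}\W^C_{x_n, 1}\Bigl(\sup_{0 \leq s \leq \delta_0}|X_s| > \rho\Bigr) = 0.
\]
Writing this probability as the ratio $W_{x_n}(\sigma_\rho \leq \delta_0;\,\tau_C > 1)/W_{x_n}(\tau_C > 1)$ with $\sigma_\rho = \inf\{s : |X_s| \geq \rho\}$, and applying the strong Markov property of unconditioned Brownian motion at $\sigma_\rho$, the numerator becomes $E_{x_n}[\indic{\sigma_\rho \leq \delta_0;\,\sigma_\rho < \tau_C}\,W_{X_{\sigma_\rho}}(\tau_C > 1 - \sigma_\rho)]$, while the denominator obeys the asymptotic $W_{x_n}(\tau_C > 1) \sim g(x_n)\int_C h(1, z)\,dz$ furnished by Lemma~\ref{equibrotue} and Lemma~\ref{domination}. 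The hard part is that both numerator and denominator vanish at the same polynomial rate $g(x_n)$ as $x_n \to 0$, so the uniform smallness in $n$ must come from the short-time factor $\{\sigma_\rho \leq \delta_0\}$. To extract it, I would exploit the scaling invariance $\W^C_{x, t} = \W^C_{x/\sqrt t, 1} \circ K_t^{-1}$ (Remark~\ref{01Law}) to transfer the estimate to unit scale, where the continuity of $\W^C_{y, s}$ in $(y, s) \in (\close C \setminus \{0\}) \times (0, \infty)$ provided by Theorem~\ref{convnice} and Lemma~\ref{prolbord} gives the uniform control needed to conclude.
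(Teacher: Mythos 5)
Your overall framework agrees with the paper's: verify the Billingsley modulus-of-continuity criterion, decompose the time interval, use the Markov property to restart the process somewhere favorable, and control the restarted process via the boundary convergence of Lemma~\ref{prolbord} together with compactness. The handling of the pieces $[\delta_0,1]$ and $[1,T]$ is sound and essentially parallels what the paper does on $[1/2,1]$.

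The gap is precisely where you flag it, on $[0,\delta_0]$, and your sketch does not close it. You want to prove
$\lim_{\delta_0\to 0}\limsup_n\W^C_{x_n,1}\bigl(\sup_{0\leq s\leq\delta_0}\lvert X_s\rvert>\rho\bigr)=0$
by scaling plus the continuity of $(y,s)\mapsto\W^C_{y,s}$. But that continuity is only available on $(\close{C}\setminus\{0\})\times(0,\infty)$ --- Theorem~\ref{convnice} and Lemma~\ref{prolbord} are specifically for boundary points $x_0\neq 0$ --- and the scaling identity $\W^C_{x,t}=\W^C_{x/\sqrt t,1}\circ K_t^{-1}$ merely relabels the problem: it sends $(x_n,1)$ to $(x_n,1)$ and in any case keeps the starting point in a shrinking neighbourhood of the vertex, where no a priori uniform control is known. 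In effect the estimate you are trying to prove directly is a form of tightness near time $0$, so one cannot obtain it as a consequence of weak convergence without circularity, and no other route is offered.

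The paper sidesteps this entirely with a restart argument that needs no quantitative escape estimate. It splits $[0,1]$ at $s=1/2$ and, for the piece $[0,s]$, introduces the exit time $\rho_B=\tau_{B(0,\epsilon)}$. The key observation is deterministic: as long as the path has not left $B(0,\epsilon)$ its oscillation is at most $2\epsilon$, hence $\{\chi(\delta,0,s)>3\epsilon\}\subset\{\rho_B<s\}\cap\{\chi(\delta,\rho_B,s)>\epsilon\}$. Applying the strong Markov property at $\rho_B$ then bounds $A_n(\delta)$ by $\sup\{\W^C_{y,t}(\chi(\delta,0,1)>\epsilon):\lVert y\rVert=\epsilon,\ t\in[s,1]\}$, a quantity independent of $n$ which tends to $0$ as $\delta\to 0$ by compactness of the sphere and Lemma~\ref{prolbord} (or Proposition~\ref{contthm} for interior accumulation points). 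So the short-time control comes ``for free'' from the geometry of the ball, and the uniform estimate is only ever invoked at starting points a fixed distance $\epsilon$ from the vertex, exactly where the boundary convergence results apply. Replacing your $[0,\delta_0]$ step with this restart-at-$\tau_{B(0,\epsilon)}$ argument would repair the proof.
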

\begin{proof}
Our proof is a modification of Shimura's one for the two-dimensional case~(\cite{Shi85}, Theorem~2). 
Since the  arguments do not depend on the value of $T$, we will only consider the case $T=1$.
 It suffices to prove that, for all $\epsilon>0$,
$$\lim_{\delta\to 0}\limsup_{n\to\infty}\W^C_{x_n,1}(\chi(\delta,0,1)>\epsilon)=0\;,$$
where 
$\chi(\delta,a,b)(w)=\sup\{\Vert w(s)-w(t)\Vert:\vert s-t\vert\leq\delta,s,t\in[a,b]\}$
is the modulus of continuity of order $\delta$ of $w$ on $[a,b]$ 
(see Billingsley~\cite{Bil99}, Theorem 7.3).

Fix $\epsilon>0$ and set $s=1/2$. Since $\chi(\delta,\cdot,\cdot)$ is subadditive when considered as a function on the set of intervals, we have
$$\W^C_{x_n,1}(\chi(\delta,0,1)>4\epsilon)\leq \underbrace{\W^C_{x_n,1}\left(\chi(\delta,0,s)>3\epsilon\right)}_{A_n(\delta)} +\underbrace{\W^C_{x_n,1}\left(\chi(\delta,s,1)>\epsilon\right)}_{B_n(\delta)}\;.$$ 
Let us start with  $B_n(\delta)$. It follows from Proposition~\ref{COMARUNDIMBM} that 
$$\lim_{r\to 0,R\to\infty}\liminf_{n\to\infty}\W^C_{x_n,1}\left(r\leq \Vert X_{s}\Vert\leq R\right)=1\;.$$
Hence we can fix $\alpha>0$ and choose $0<r<R$ such that 
$$\inf_n\W^C_{x_n,1}\left(r\leq \Vert X_{s}\Vert\leq R\right)\geq 1-\alpha\;.$$
We then have
$$B_n(\delta)\leq \W^C_{x_n,1}\left(r\leq \Vert X_{s}\Vert\leq R;\chi(\delta,s,1)>\epsilon\right)+\alpha\;,$$
So, by the Markov property,
\begin{eqnarray*}
B_n(\delta)& \leq &\W^C_{x_n,1}\left(r\leq \Vert X_{s}\Vert\leq R;\W^C_{X_s,1-s}(\chi(\delta,0,1-s)>\epsilon)\right)+\alpha\\
   & \leq & \underbrace{\sup\left\{\W^C_{y,s}\left(\chi(\delta,0,s)>\epsilon)\right):y\in C\mbox{ and }r\leq \Vert y\Vert\leq R\right\}}_{D(\delta)}+\alpha\;.
\end{eqnarray*}
Now, if $D(\delta)$ did not tend to $0$ as  $\delta$ goes to $0$, then we could find a sequence $(\delta_n)$ converging to $0$ and a sequence $(y_n)$ of points of $C$ converging to a point $y\in \overline{C}\setminus\{0\}$ such that
$\liminf_{n}\W^C_{y_n,s}\left(\chi(\delta_n,0,s)>\epsilon\right)>0$,
which would contradict the weak convergence of the sequence of probability measures
$\left(\W^C_{y_n,s}\right)$ (Lemma~\ref{prolbord} or Proposition~\ref{contthm} if $y\in C$).
This proves that $\lim_{\delta\to 0}\limsup_n B_n(\delta)\leq \alpha$, and letting $\alpha\to 0$ then gives 
$$\lim_{\delta\to 0}\limsup_{n\to\infty} B_n(\delta)=0\;.$$

We now turn to $A_n(\delta)$.
Let $\rho=\tau_{B(0,\epsilon)}$ be the exit time from the ball $B(0,\epsilon)$ with center at $0$ and radius  $\epsilon$. 
Since the modulus of continuity of a path $w$ is less than $2\epsilon$ as long as it has not left the ball $B(0,\epsilon)$, we have
\begin{eqnarray*}
A_n(\delta) & \leq & \W^C_{x_n,1}\left(\rho<s;\chi(\delta,\rho,s)>\epsilon\right)\\
&\leq & \W^C_{x_n,1}\left(\rho<s;\W^C_{X(\rho),1-\rho}(\chi(\delta,0,1)>\epsilon)\right)\;.
\end{eqnarray*}
Hence
$$
\limsup_{n\to\infty} A_n(\delta) 
 \leq  \sup\{\W^C_{y,t}(\chi(\delta,0,1)>\epsilon):y\in C, \Vert y\Vert=\epsilon\mbox{ and } t\in [s,1]\}\;.
$$
In the same way as above, we then get
$\limsup_{n} A_n(\delta)=0$,
which is sufficient to prove Proposition~\ref{tenssuite}.
\end{proof}

Together with Proposition~\ref{COMARUNDIMBM}, Proposition~\ref{tenssuite} proves that $\W^C_{x,1}$ converges weakly on every $\C_T$, $T>0$, as $x\in C$ tends to $0$. This is equivalent to weak convergence on $\C_{\infty}$; thus Theorem~\ref{mainthm} is proven.

The limit law will be denoted by $\W^C_{0,1}$ and called the law of $C$-Brownian meander.
In view of Theorem~\ref{mainthm}, we shall interpret
the $C$-Brownian meander as a Brownian motion conditioned to stay in $C$ for a unit of time.

\subsection{Some properties of the $C$-Brownian meander}
Since $\W_{0,1}^C(X_t\in dy)$ has a probability density $e(t,y)$ for each $t\in (0,1)$, it follows from Proposition~\ref{prolongement} that 
$W_{0,1}^C(\tau_C>1)=1$ and that the $C$-Brownian meander satisfies the following Markov property:
For all $t> 0$, $A\in \F_{t^+}$ and $B\in\F$,
\begin{equation*}
\W^C_{0,1}(A;\theta_t^{-1}B)=\W^C_{0,1}\left(A;\W^C_{X_t,1-t}(B)\right)\;.
\end{equation*} 

The $C$-Brownian meander starts from the vertex of the cone $C$ and stays in it for a unit of time. The law of its exit time from $C$ after time $1$ is given in the next proposition.

\begin{proposition}\label{resteencore} For any $t>1$, we have
$$\W^C_{0,1}(\tau_C>t)=t^{-\frac{\alpha_1}{2}+\frac{d-2}{4}}\;.$$
\end{proposition}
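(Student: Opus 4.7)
The plan is to reduce everything to the sharp asymptotic for $W_x(\tau_C>1)$ already established in Section~\ref{ccones}. Fix $t>1$. For any $x\in C$, since $\{\tau_C>t\}\subset\{\tau_C>1\}$, the very definition of $\W^C_{x,1}$ gives
$$\W^C_{x,1}(\tau_C>t)=\frac{W_x(\tau_C>t)}{W_x(\tau_C>1)}.$$
Because the cone $C$ is invariant under dilations, Brownian scaling rewrites the numerator as $W_x(\tau_C>t)=W_{x/\sqrt{t}}(\tau_C>1)$, and hence
$$\W^C_{x,1}(\tau_C>t)=\frac{W_{x/\sqrt{t}}(\tau_C>1)}{W_x(\tau_C>1)}.$$

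The next step is to invoke the asymptotic equivalence~\eqref{equitepssor}, an immediate corollary of Lemmas~\ref{equibrotue} and~\ref{domination}: as $y\to 0$ in $C$,
$$\frac{W_y(\tau_C>1)}{g(y)}\longrightarrow\int_C h(1,z)\,dz>0.$$
Applied with both $y=x$ and $y=x/\sqrt{t}$, the positive prefactor cancels and we obtain
$$\lim_{x\to 0}\W^C_{x,1}(\tau_C>t)=\lim_{x\to 0}\frac{g(x/\sqrt{t})}{g(x)}.$$
Writing $x=\rho\theta$ with $\theta\in\O$, the explicit formula $g(x)=\rho^{\alpha_1-(d/2-1)}m_1(\theta)$ makes this ratio independent of $x$:
$$\frac{g(x/\sqrt{t})}{g(x)}=t^{-(\alpha_1-(d/2-1))/2}=t^{-\alpha_1/2+(d-2)/4}.$$

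It remains to transfer this limit to the $C$-Brownian meander. Since $\W^C_{x,1}\Rightarrow\W^C_{0,1}$ by Theorem~\ref{mainthm}, it suffices to check that $\{\tau_C>t\}$ is a $\W^C_{0,1}$-continuity set, \emph{i.e.} $\W^C_{0,1}(\tau_C=t)=0$. For this I would apply the Markov property~\eqref{markovprolongement} of $\W^C_{0,1}$ at an arbitrary intermediate time $t'\in(0,1)$:
$$\W^C_{0,1}(\tau_C=t)=\W^C_{0,1}\left(\W^C_{X_{t'},1-t'}(\tau_C=t-t')\right)=0,$$
using that $X_{t'}\in C$ almost surely (the entrance law has a density) and that $W_y(\tau_C=t-t')=0$ for every $y\in\RR^d$ by~\cite{Por78}, Theorem~4.7. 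Combined with the co-regularity of $C$, a Skorokhod-representation argument then shows $\W^C_{x,1}(\tau_C>t)\to\W^C_{0,1}(\tau_C>t)$ as $x\to 0$, completing the proof. The main technical subtlety in this plan is the last continuity-set verification: paths under $\W^C_{0,1}$ start on $\partial C$, where $\tau_C$ is not lower-semicontinuous in general; this is overcome by exploiting that almost surely the path enters $C$ instantaneously and satisfies $\tau_C=\tau_{\overline{C}}>1$, which suffices to compare $\tau_C$ pointwise along a.s.\ convergent couplings at the later time $t>1$.
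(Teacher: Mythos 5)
Your proof is correct, but it follows a genuinely different route from the paper's, and it's worth spelling out the trade-off.

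The paper works entirely on the limit side: using the Markov property of $\W^C_{0,1}$ at time $1$, it writes
$$\W^C_{0,1}(\tau_C>t)=\int_C e(1,y)\,W_y(\tau_C>t-1)\,dy,$$
then performs the change of variables $y=\sqrt{t}u$ and exploits the explicit form of the density~\eqref{densite} together with Brownian scaling to recognize the integrand, up to the factor $t^{-\alpha_1/2+(d-2)/4}$, as $e(1/t,u)$, which integrates to~$1$. This makes the exact evaluation a pure identity; there is no limit to be exchanged and no continuity-set issue.

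You instead stay on the prelimit side: you compute $\W^C_{x,1}(\tau_C>t)$ by combining its definition, the scaling identity $W_x(\tau_C>t)=W_{x/\sqrt{t}}(\tau_C>1)$, and the asymptotic~\eqref{equitepssor}, so the positive constant $\int_C h(1,z)\,dz$ cancels and only the ratio $g(x/\sqrt{t})/g(x)=t^{-\alpha_1/2+(d-2)/4}$ survives. That computation is correct and in fact a bit more transparent than the paper's, because it isolates where the exponent comes from (the homogeneity of $g$). The price is the final transfer $\W^C_{x,1}(\tau_C>t)\to\W^C_{0,1}(\tau_C>t)$, which does not follow from $\W^C_{0,1}(\tau_C=t)=0$ alone: $\indic{\{\tau_C>t\}}$ is not lower semicontinuous at paths starting on $\partial C$, so $\{\tau_C>t\}$ is not a $\W^C_{0,1}$-continuity set in the naive sense. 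Your closing remark correctly identifies what is actually needed, namely to replace $\tau_C$ by the shifted time $\tau^1_C=\inf\{s\geq 1: X_s\notin C\}$, which agrees with $\tau_C$ a.s.\ under both $\W^C_{x,1}$ and $\W^C_{0,1}$, and then to verify the three a.s.\ conditions $X_1\in C$, $\tau^1_C=\tau^1_{\overline{C}}$ (co-regularity plus the Markov property at time $1$) and $\tau^1_C\neq t$ that make $\indic{\{\tau^1_C>t\}}$ continuous at a typical path. This is exactly the mechanism the paper deploys in the proof of Proposition~\ref{progress} with the shifted times $\tau^s_U$, so the gap is fillable, but it should be filled explicitly rather than hinted at. In short: your route reveals the exponent more conceptually via the homogeneity of $g$, while the paper's route sidesteps the weak-convergence machinery entirely by computing directly against the known entrance density.
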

\begin{proof}
By the Markov property, we have
$$
\W^C_{0,1}(\tau_C>t)=\W^C_{0,1}\Big(W_{X_1}(\tau_C>t-1)\Big)
=\int_C e(1,y)W_y(\tau_C>t-1)\,dy\;.
$$
With the change of variables $y=\sqrt{t}u$, the last integral becomes
$$\int_C e(1,\sqrt{t}u)W_{\sqrt{t}u}(\tau_C>t-1)\,t^{\frac{d}{2}}\,du\;.$$
But  
$W_{\sqrt{t}u}(\tau_C>t-1)=W_u(\tau_C>1-1/t)$
by the scaling property of Brownian motion, and from relation~\eqref{densite} p.~\pageref{densite} it is easily seen that
$$e(1,\sqrt{t}u)W_u(\tau_C>1-1/t)\,t^{\frac{d}{2}}=t^{-\frac{\alpha_1}{2}+\frac{d-2}{4}}e(1/t,u)\;.$$
The expected result follows from the fact that $e(1/t,u)$ is a probability density.
\end{proof}

\section*{Acknowledgments}
This paper is related to the \textit{Th\`ese de doctorat} of the author. We would like to thank our advisors, Emmanuel Lesigne and Marc Peign\'e for their constant help in preparing the thesis and the present paper. We would also like to thank the referee for helping in many ways to improve the original manuscript.


\end{document}